\def\NZQ{\mathbb}               % the font for N,Z,Q,R,C
\def\NN{{\NZQ N}}
\def\Gc{{\mathcal G}}
\def\Gc{{\mathcal G}}
\def\opn#1#2{\def#1{\operatorname{#2}}} % to make operators
	\opn\chara{char} \opn\length{\ell} \opn\pd{pd} \opn\rk{rk}
	\opn\projdim{proj\,dim} \opn\injdim{inj\,dim} \opn\rank{rank}
	\opn\depth{depth} \opn\grade{grade} \opn\height{height}
	\opn\embdim{emb\,dim} \opn\codim{codim}
	\opn\Cl{Cl}
	\opn\Tr{Tr} \opn\bigrank{big\,rank}
	\opn\superheight{superheight}\opn\lcm{lcm}
	\opn\trdeg{tr\,deg}%\emph{
	\opn\rdeg{rdeg}
	\opn\reg{reg} \opn\lreg{lreg} \opn\ini{in} \opn\lpd{lpd}
	\opn\size{size} \opn\sdepth{sdepth}
	\opn\link{link}\opn\fdepth{fdepth}\opn\lex{lex}
	\opn\tr{tr}
	\opn\type{type}
	\opn\gap{gap}
	\opn\arithdeg{arith-deg}
	\opn\revlex{revlex}
	\opn\div{div} \opn\Div{Div} \opn\cl{cl} \opn\Cl{Cl}
	\opn\Spec{Spec} \opn\Supp{Supp} \opn\supp{supp} \opn\Sing{Sing}
	\opn\Ass{Ass} \opn\Min{Min}\opn\Mon{Mon}
	\opn\Ann{Ann} \opn\Rad{Rad} \opn\Soc{Soc}
	\opn\Im{Im} \opn\Ker{Ker} \opn\Coker{Coker} \opn\Am{Am}
	\opn\Hom{Hom} \opn\Tor{Tor} \opn\Ext{Ext} \opn\End{End}
	\opn\Aut{Aut} \opn\id{id}
	\opn\nat{nat}
	\opn\pff{pf}%   \pf exists already
	\opn\Pf{Pf} \opn\GL{GL} \opn\SL{SL} \opn\mod{mod} \opn\ord{ord}
	\opn\Gin{Gin} \opn\Hilb{Hilb}\opn\sort{sort}
	\opn\PF{PF}\opn\Ap{Ap}
	\opn\mult{mult}
	\opn\bight{bight}
	\opn\div{div}
	\opn\Div{Div}
	\opn\aff{aff}
	\opn\relint{relint} \opn\st{st}
	\opn\lk{lk} \opn\cn{cn} \opn\core{core} \opn\vol{vol}  \opn\inp{inp}
	\opn\nilpot{nilpot}
	\opn\link{link} \opn\star{star}\opn\lex{lex}\opn\set{set}
	\opn\width{wd}
	\opn\Fr{F}
	\opn\QF{QF}
	\opn\G{G}
	\opn\type{type}\opn\res{res}
	\opn\conv{conv}
	\opn\Int{Int}
	\opn\Deg{Deg}
	\opn\Sym{Sym}
	\opn\Con{Con}
	\opn\gr{gr}
	\def\pot#1#2{#1[\kern-0.28ex[#2]\kern-0.28ex]}
	\opn\dirlim{\underrightarrow{\lim}}
	\opn\inivlim{\underleftarrow{\lim}}
	\def\Implies{\ifmmode\Longrightarrow \else
		\unskip${}\Longrightarrow{}$\ignorespaces\fi}
	\def\implies{\ifmmode\Rightarrow \else
		\unskip${}\Rightarrow{}$\ignorespaces\fi}
	\def\iff{\ifmmode\Longleftrightarrow \else
		\unskip${}\Longleftrightarrow{}$\ignorespaces\fi}
	\newtheorem{Theorem}{Theorem}[section]
	\newtheorem{Lemma}[Theorem]{Lemma}
	\newtheorem{Corollary}[Theorem]{Corollary}
	\newtheorem{Proposition}[Theorem]{Proposition}
	\theoremstyle{definition}
	\newtheorem{Remark}[Theorem]{Remark}
	\newtheorem{Definition}[Theorem]{Definition}
\begin{document}

\title[Bounded powers of edge ideals]{Bounded powers of edge ideals: regularity and linear quotients}

\author[T.~Hibi]{Takayuki Hibi}
\author[S.~A.~ Seyed Fakhari]{Seyed Amin Seyed Fakhari}

\address{(Takayuki Hibi) Department of Pure and Applied Mathematics, Graduate School of Information Science and Technology, Osaka University, Suita, Osaka 565--0871, Japan}
\email{hibi@math.sci.osaka-u.ac.jp}
\address{(Seyed Amin Seyed Fakhari) Departamento de Matem\'aticas, Universidad de los Andes, Bogot\'a, Colombia}
\email{s.seyedfakhari@uniandes.edu.co}

\subjclass[2020]{Primary: 13D02, 05E40}

\keywords{Bounded powers, Castelnuovo--Mumford regularity, Edge ideals, Linear quotients, Polymatroidal ideas}

\begin{abstract}
Let $S=K[x_1, \ldots,x_n]$ denote the polynomial ring in $n$ variables over a field $K$ and let $I \subset S$ be a monomial ideal. For a vector $\mathfrak{c}\in\NN^n$, we set $I_{\mathfrak{c}}$ to be the ideal generated by monomials belonging to $I$ whose exponent vectors are componentwise bounded above by $\mathfrak{c}$. Also, let $\delta_{\mathfrak{c}}(I)$ be the largest integer $k$ such that $(I^k)_{\mathfrak{c}}\neq 0$. It is shown that for every graph $G$ with edge ideal $I(G)$, the ideal $(I(G)^{\delta_{\mathfrak{c}}(I)})_{\mathfrak{c}}$ is a polymatroidal ideal. Moreover, we show that for each integer $s=1, \ldots \delta_{\mathfrak{c}}(I(G))$, the Castelnuovo--Mumford regularity of $(I(G)^s)_{\mathfrak{c}}$ is bounded above by $\delta_{\mathfrak{c}}(I(G))+s$.
\end{abstract}

\maketitle

\section{Introduction} \label{sec1}
Let $S=K[x_1, \ldots,x_n]$ denote the polynomial ring in $n$ variables over a field $K$. Also, let $\NN$ denote the set of nonnegative integers. Consider a vector $\mathfrak{c}=(c_1, \ldots, c_n) \in \mathbb{N}^n$. A monomial $u=x_1^{a_1}\cdots x_n^{a_n}$ of $S$ is called {\it $\mathfrak{c}$-bounded} if $a_i\leq c_i$ for each $i=1, 2, \ldots, n$. In particular, $\mathfrak{c}$-bounded monomials with $\mathfrak{c}=(1,\ldots, 1)$ are squarefree monomials.

Let $I \subset S$ be a monomial ideal. For any vector $\mathfrak{c}\in \mathbb{N}^n$, the ideal generated by all $\mathfrak{c}$-bounded monomials belonging to $I$ will be denoted by $I_\mathfrak{c}$. Especially, if $I$ is generated by monomial of degree $d$ and $\mathfrak{c}=(d, \ldots, d)$, then $I_\mathfrak{c}=I$.  Furthermore, if $I = (x_1, \ldots, x_n)^d$, then $I_\mathfrak{c}$ is called an {\it ideal of Veronese type} (\cite[Example 12.2.8]{HHgtm260}). For any positive integer $s\geq 1$, the ideal $(I^s)_{\mathfrak{c}}$ is called the {\it $s$th $\mathfrak{c}$-bounded power of $I$} and is the main objective of this paper. If $\mathfrak{c}=(1,\ldots, 1)$, then $(I^s)_{\mathfrak{c}}$ is a squarefree monomial ideal and because of this reason, it is also called $s$th squarefree power of $I$. The $s$th squarefree power of $I$ is denoted by $I^{[s]}$.

Let $G$ be a simple graph with vertex set $V(G)=\{x_1, \ldots, x_n\}$ and edge set $E(G)$. The edge ideal of $G$ is defined to be the ideal$$I(G):=(x_ix_j : x_ix_j\in E(G))\subset S.$$

The study of squarefree powers of edge ideals was initiated in \cite{BHZ}. This study was continued by several authors (see e.g., \cite{cfl}, \cite{ef1}, \cite{ef}, \cite{ehhs}, \cite{ehhs1}, \cite{eh}, \cite{fhh}, \cite{fs}, \cite{s1}, \cite{s2}, \cite{s3}). In this paper, we study the $\mathfrak{c}$-bounded powers of edge ideal. Indeed, we are mostly interested in the study of Castelnuovo--Mumford regularity and the property of having linear quotients for these kind of ideals.

For a monomial ideal $I$, let $\delta_{\mathfrak{c}}(I)$ denote the highest integer $k$ such that $(I^k)_{\mathfrak{c}}\neq 0$. For example, when $I=I(G)$ is the edge ideal of a graph $G$ and $\mathfrak{c}=(1, \ldots, 1)$, then $\delta_{\mathfrak{c}}(I)$ is just the matching number ${\rm match}(G)$ of $G$. We prove in Proposition \ref{Boston} that if a monomial ideal $I$ has linear quotients, then for every vector $\mathfrak{c}$, the ideal $I_{\mathfrak{c}}$ has linear quotients too. As a consequence, if each component of $\mathfrak{c}$ is strictly positive, then $(I(G)^s)_{\mathfrak{c}}$ has linear quotients, for each integer $s=1, 2, \ldots, \delta_{c}(I(G))$ if and only if the complementary graph $\overline{G}$ is chordal, Theorem \ref{edge}.

We know from \cite[Theorem 1 on page 246]{w} that for every graph $G$, the ideal $I(G)^{[{\rm match}(G)]}$ is a matroidal ideal. As a generalization of this result, we prove in Theorem \ref{Essen} that for any vector $\mathfrak{c}\in \NN^n$, the ideal $(I(G)^{\delta_\mathfrak{c}(I(G))})_{\mathfrak{c}}$ is a polymatroidal ideal, and hence, it has linear quotients.

In Section \ref{sec5}, we investigate the Castelnuovo--Mumford regularity of bounded powers of edge ideals. As the main result of the section, we prove in Theorem \ref{regmain} that for each integer $s=1, \ldots \delta_{\mathfrak{c}}(I(G))$, the regularity of $(I(G)^s)_{\mathfrak{c}}$ is bounded above by $\delta_{\mathfrak{c}}(I(G))+s$. This generalizes the inequality ${\rm reg}\big(I(G)^{[s]}\big)\leq {\rm match}(G)+s$ which was proved in \cite{s3} for each integer $s=1, \ldots, {\rm match}(G)$.

\section{Preliminaries} \label{sec2}

In this section, we provide the definitions and basic facts which will be used in the next sections.

All graphs in this paper are simple, i.e., the graphs have no loops and no multiple edges. Let $G$ be a graph with vertex set $V(G)=\big\{x_1, \ldots,
x_n\big\}$ and edge set $E(G)$. We identify the vertices (resp. edges) of $G$ with variables (resp. corresponding quadratic monomials) of $S$. For a vertex $x_i$, the {\it neighbor set} of $x_i$ is $N_G(x_i)=\{x_j\mid x_ix_j\in E(G)\}$. We set $N_G[x_i]=N_G(x_i)\cup \{x_i\}$. For every subset $U\subset V(G)$, the graph $G\setminus U$ has vertex set $V(G\setminus U)=V(G)\setminus U$ and edge set $E(G\setminus U)=\{e\in E(G)\mid e\cap U=\emptyset\}$. A subgraph $H$ of $G$ is called {\it induced} provided that two vertices of $H$ are adjacent if and only if they are adjacent in $G$. For every graph $G$, its {\it complementary graph}
$\overline{G}$ is the graph with $V(\overline{G})=V(G)$ and two vertices $x_i$ and $x_j$ are adjacent in $\overline{G}$ if they are not adjacent in $G$. A graph $G$ is called {\it chordal} if it does not have a cycle of length at least four as an induced subgraph.

Let $G$ be a graph. A subset $M\subseteq E(G)$ is a {\it matching} if $e\cap e'=\emptyset$, for every pair of edges $e, e'\in M$. The cardinality of the largest matching of $G$ is called the {\it matching number} of $G$ and is denoted by ${\rm match}(G)$.

\begin{Definition} \cite[Definition 6.2]{b}
Let $G$ be a graph. Two vertices $u$ and $v$ ($u$ may be equal to $v$) are said to be {\it even-connected} with respect to an $s$-fold product $e_1 \cdots e_s$ of edges of $G$, if there is an integer $r\geq 1$ and a sequence $p_0, p_1, \ldots, p_{2r+1}$ of vertices of $G$ such that the following conditions hold.
\begin{itemize}
\item[(i)] $p_0=u$ and $p_{2r+1}=v$.

\item[(ii)] $p_0p_1, p_1p_2, \ldots, p_{2r}p_{2r+1}$ are edges of $G$.

\item[(iii)] For all $0\leq k\leq r-1, \{p_{2k+1},p_{2k+2}\}=e_i$ for some $i$.

\item[(iv)] For all $i$, $|\{k\mid \{p_{2k+1},p_{2k+2}\}=e_i\}| \leq |\{j\mid e_i=e_j\}|$.
\end{itemize}
Moreover, the sequence $p_0, p_1, \ldots, p_{2r+1}$ is called an {\it even-connection} between $u$ and $v$ with respect to $e_1 \cdots e_s$.
\end{Definition}

Let $M$ be a graded $S$-module. Suppose that the minimal graded free resolution of $M$ is given by
$$0\rightarrow \cdots \rightarrow \bigoplus_j S(-j)^{\beta _{1,j}(M)}\rightarrow \bigoplus_j S(-j)^{\beta _{0,j}(M)}\rightarrow M\rightarrow 0.$$
The Castelnuovo-Mumford regularity (or simply, regularity) of $M$, denoted by ${\rm reg}(M)$, is defined as
$${\rm reg}(M)={\rm max}\{j-i\mid \beta _{i,j}(M)\neq 0\}.$$Let $I$ be a homogeneous ideal generated in a single degree $d$. Then $I$ is said to have a linear resolution if ${\rm reg}(I)=d$.

For a monomial ideal $I$, the unique minimal set of its monomial generators will be denoted by $\Gc(I)$.  We say that $I$ has {\it linear quotients} if there is an ordering $u_1 \prec \cdots \prec u_m$ of the monomials belonging to $\Gc(I)$ for which the colon ideal $(u_1, \ldots, u_{i}):u_{i+1}$ is generated by a subset of $\{x_1, \ldots, x_n\}$ for each $1 \leq i < m$.  We then call $u_1 \prec \cdots \prec u_m$ a {\it linear quotients ordering} for $I$. We also say that $I$ has linear quotients with respect to the ordering $u_1 \prec \cdots \prec u_m$.  If $I$ is generated in a single degree and has linear quotients, then it has a linear resolution (\cite[Proposition 8.2.1]{HHgtm260}).

Let $I$ be a monomial ideal of $S$ which is
generated in a single degree. The ideal $I$ is called {\it polymatroidal} if
the following exchange condition is satisfied: For monomials $u=x^{a_1}_1
\ldots x^{a_n}_ n$ and $v = x^{b_1}_1\ldots x^{b_n}_ n$ belonging to $\Gc(I)$
and for every $i$ with $a_i > b_i$, there exists an integer $1\leq j\leq n$ with $a_j < b_j$ such that
$x_j(u/x_i)\in \Gc(I)$. A squarefree polymatroidal ideal is called {\it matroidal}. It is known that any polymatroidal ideal has linear quotients (see \cite[Theorem 12.6.2]{HHgtm260}).

Let $I$ be a monomial ideal of
$S$ with minimal generators $u_1,\ldots,u_m$,
where $u_j=\prod_{i=1}^{n}x_i^{a_{i,j}}$, $1\leq j\leq m$. For every $i$
with $1\leq i\leq n$, let $a_i=\max\{a_{i,j}\mid 1\leq j\leq m\}$, and
suppose that $$T=K[x_{1,1},x_{1,2},\ldots,x_{1,a_1},x_{2,1},
x_{2,2},\ldots,x_{2,a_2},\ldots,x_{n,1},x_{n,2},\ldots,x_{n,a_n}]$$ is a
polynomial ring over the field $K$. Let $I^{{\rm pol}}$ be the squarefree
monomial ideal of $T$ with minimal generators $u_1^{{\rm pol}},\ldots,u_m^{{\rm pol}}$, where
$u_j^{{\rm pol}}=\prod_{i=1}^{n}\prod_{k=1}^{a_{i,j}}x_{i,k}$, $1\leq j\leq m$. The ideal $I^{{\rm pol}}$
is called the {\it polarization} of $I$. We know from \cite[Corollary 1.6.3]{HHgtm260} that ${\rm reg}(I^{{\rm pol}})={\rm reg}(I)$.

\section{Monomial ideals with bounded powers and linear quotients} \label{sec3}
Let $S=K[x_1, \ldots,x_n]$ denote the polynomial ring in $n$ variables over a field $K$ and $I \subset S$ a monomial ideal. Suppose that $I$ has linear quotients and let $\mathfrak{c}\in \mathbb{N}^n$ be a vector. In the following proposition, we show that a linear quotients ordering of $I$ is inherited by $I_\mathfrak{c}$.

\begin{Proposition}
\label{Boston}
Let $I\subset S$ be a monomial ideal which has linear quotients. Then for every vector $\mathfrak{c}\in \NN^n$, the ideal $I_\mathfrak{c}$ has linear quotients.
\end{Proposition}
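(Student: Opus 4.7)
The plan is to take a linear quotients ordering $u_1 \prec \cdots \prec u_m$ of $\Gc(I)$ and show that the subsequence consisting of those $u_i$ which are $\mathfrak{c}$-bounded serves as a linear quotients ordering for $I_\mathfrak{c}$.

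The first step I would carry out is a clean identification of the minimal generators: $\Gc(I_\mathfrak{c})$ coincides with the set of $\mathfrak{c}$-bounded elements of $\Gc(I)$. Indeed, if $u \in \Gc(I_\mathfrak{c})$, then $u \in I$ is divisible by some $g \in \Gc(I)$; since every divisor of a $\mathfrak{c}$-bounded monomial is $\mathfrak{c}$-bounded, $g$ lies in $I_\mathfrak{c}$, and minimality of $u$ forces $g = u$. Conversely, a $\mathfrak{c}$-bounded element of $\Gc(I)$ cannot properly be divided by any monomial of $I_\mathfrak{c}$. Relabel the $\mathfrak{c}$-bounded generators as $u_{i_1} \prec \cdots \prec u_{i_k}$ with $i_1 < \cdots < i_k$.

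The heart of the argument is then the verification of linear quotients for this inherited ordering. Fix $1 \leq r < j \leq k$ and examine the generator $u_{i_r}/\gcd(u_{i_r}, u_{i_j})$ of the colon $(u_{i_1},\ldots,u_{i_{j-1}}) : u_{i_j}$. Writing $u_{i_r} = \prod_\nu x_\nu^{a_\nu}$ and $u_{i_j} = \prod_\nu x_\nu^{b_\nu}$, the linear quotients hypothesis for $I$ provides a variable $x_\ell$ with $a_\ell > b_\ell$ such that $x_\ell u_{i_j} \in (u_1,\ldots,u_{i_j-1})$. The crucial observation is that $x_\ell u_{i_j}$ is itself $\mathfrak{c}$-bounded: its $\ell$-th exponent equals $b_\ell + 1 \leq a_\ell \leq c_\ell$, while its remaining exponents coincide with those of $u_{i_j}$ and are therefore componentwise bounded by $\mathfrak{c}$.

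With $x_\ell u_{i_j}$ known to be $\mathfrak{c}$-bounded, the final step is routine. Any $u_t$ with $t < i_j$ dividing $x_\ell u_{i_j}$ is a divisor of a $\mathfrak{c}$-bounded monomial, hence $\mathfrak{c}$-bounded, so $u_t \in \{u_{i_1},\ldots,u_{i_{j-1}}\}$ by the first step. Consequently $x_\ell \in (u_{i_1},\ldots,u_{i_{j-1}}) : u_{i_j}$, showing that this colon is generated by variables, as required. I expect the only genuine subtlety to be isolating the chain of inequalities $b_\ell + 1 \leq a_\ell \leq c_\ell$ that makes $x_\ell u_{i_j}$ $\mathfrak{c}$-bounded; the remainder of the argument is standard bookkeeping with colons of monomial ideals.
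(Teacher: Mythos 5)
Your argument is correct and follows essentially the same route as the paper: restrict a linear quotients ordering of $\Gc(I)$ to its $\mathfrak{c}$-bounded members, and observe that the witness variable $x_\ell$ satisfies $\deg_{x_\ell}(u_{i_j}) < \deg_{x_\ell}(u_{i_r}) \le c_\ell$, so that $x_\ell u_{i_j}$ --- and hence any minimal generator of $I$ dividing it --- is again $\mathfrak{c}$-bounded. The paper phrases this by exhibiting a specific earlier generator $v_r$ with $v_r : v_{\ell_j} = x_p$ and checking that $v_r$ is $\mathfrak{c}$-bounded, which is the same bookkeeping you carry out.
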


\begin{proof}
Assume that $\mathfrak{c}=(c_1, \ldots, c_n)$. Following the convention of dealing with linear quotients, for monomials $u$ and $v$ of $S$, we employ the notation
$$u:v=\frac{u}{{\rm gcd}(u,v)}.$$

Let $\Gc(I)=\{v_1, \ldots, v_s\}$ and suppose $I$ has linear quotients with respect to the ordering $v_1\prec \cdots \prec v_s$. Since $\Gc(I_{\mathfrak{c}})\subseteq \Gc(I)$, there exist $1 \leq \ell_1<\cdots<\ell_q \leq s$ with $\Gc(I_{\mathfrak{c}})=\{v_{\ell_1}, \ldots, v_{\ell_q}\}$. For each $1\leq k\leq q$, we set $u_k=v_{\ell_k}$. We claim that $I_\mathfrak{c}$ has linear quotients with respect to the ordering $u_1\prec \cdots \prec u_q$. Fix integers $i$ and $j$ with $1\leq i< j\leq q$. We must prove that there is $k$ with $1\leq k< j$ such that $u_k:u_j$ %=\frac{u_k}{{\rm gcd}(u_k,u_j)}$
is a variable which divides $u_i:u_j$. %=\frac{u_i}{{\rm gcd}(u_i,u_j)}$.
Since $v_1\prec \cdots \prec v_s$ is a linear quotients ordering for $I$, there is $r < {\ell j}$ such that $v_r:v_{\ell_j}=v_r:u_j$ is a variable (say $x_p$) which divides $v_{\ell_i}:v_{\ell_j} = u_i:u_j$.  So, it is enough to prove that $v_r$ is a $\mathfrak{c}$-bounded monomial. Remind that $v_{\ell_i}$ and $v_{\ell_j}$ are $\mathfrak{c}$-bounded. Since $x_p$ divides $v_{\ell_i}:v_{\ell_j}$,
% = \frac{v_{\ell_i}}{{\rm gcd}(v_{\ell_i},v_{\ell_j})}$,
we conclude that $v_{\ell_j}$ is not divisible by $x_p^{c_p}$. Hence $x_pv_{\ell_j}$ is a $\mathfrak{c}$-bounded monomial. On the other hand, it follows from $v_r:v_{\ell_j} %=\frac{v_r}{{\rm gcd}(v_r,v_{\ell_j})}
=x_p$ that $v_r$ divides $x_pv_{\ell_j}$.  Thus $v_r$ is a $\mathfrak{c}$-bounded monomial, as desired.
\end{proof}

Let $\mathfrak{c}=(c_1, \ldots, c_n)$ and $\mathfrak{c'}=(c'_1, \ldots, c'_n)$ belong to $\mathbb{N}^n$. We define $\mathfrak{c} \leq^\# \mathfrak{c'}$ if $c_i \leq c'_i$, for each integer $i=1, 2, \ldots, n$. As an immediate consequence of Proposition \ref{Boston}, we obtain the following corollary. 

\begin{Corollary}
\label{Istanbul}
Let $I \subset S$ be a monomial ideal and let $\mathfrak{c}$ and $\mathfrak{c'}$ be two vectors in $\mathbb{N}^n$ such that $\mathfrak{c'} \leq^\# \mathfrak{c}$.  If $I_\mathfrak{c}$ has linear quotients, then $I_\mathfrak{c'}$ has linear quotients too.
\end{Corollary}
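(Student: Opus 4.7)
The plan is to reduce the claim to Proposition \ref{Boston} by means of the identity
\[
(I_\mathfrak{c})_{\mathfrak{c'}} = I_{\mathfrak{c'}},
\]
which should hold whenever $\mathfrak{c'}\leq^\# \mathfrak{c}$. Once this identity is in hand, Proposition \ref{Boston} applied to the monomial ideal $I_\mathfrak{c}$ (which by hypothesis has linear quotients), with the bounding vector $\mathfrak{c'}$, immediately yields that $(I_\mathfrak{c})_{\mathfrak{c'}} = I_{\mathfrak{c'}}$ has linear quotients. So the whole corollary is packaged into verifying this one equality.

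To verify the identity I would argue by double inclusion on monomials. For $I_{\mathfrak{c'}}\subseteq (I_\mathfrak{c})_{\mathfrak{c'}}$, note that any $\mathfrak{c'}$-bounded monomial $u\in I$ is, by virtue of $\mathfrak{c'}\leq^\# \mathfrak{c}$, also $\mathfrak{c}$-bounded and therefore belongs to $I_\mathfrak{c}$; being still $\mathfrak{c'}$-bounded, it then lies in $(I_\mathfrak{c})_{\mathfrak{c'}}$. The reverse inclusion is even more direct: any element of $I_\mathfrak{c}$ already belongs to $I$, so a $\mathfrak{c'}$-bounded monomial of $I_\mathfrak{c}$ is automatically a $\mathfrak{c'}$-bounded monomial of $I$ and hence lies in $I_{\mathfrak{c'}}$.

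The argument presents no real obstacle; the only point worth highlighting is the precise role of the componentwise inequality $\mathfrak{c'}\leq^\# \mathfrak{c}$. It is used in exactly one step, namely to ensure that $\mathfrak{c'}$-bounded monomials can be pushed into $I_\mathfrak{c}$. Without this hypothesis the key identity would fail, and in particular one could not borrow the linear quotients ordering produced in the proof of Proposition \ref{Boston}.
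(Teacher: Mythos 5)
Your proposal is correct and is exactly the intended argument: the paper offers no written proof beyond calling the corollary an immediate consequence of Proposition \ref{Boston}, and the only way to realize that is to apply the proposition to $I_{\mathfrak{c}}$ with bounding vector $\mathfrak{c'}$ and identify $(I_{\mathfrak{c}})_{\mathfrak{c'}}$ with $I_{\mathfrak{c'}}$ via the double inclusion you verify. Your explicit check of that identity, including where $\mathfrak{c'}\leq^{\#}\mathfrak{c}$ is used, is accurate and complete.
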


\section{Edge ideals and linear quotients} \label{sec4}
In this section, we study the property of having linear quotients for bounded powers of edge ideals. The following theorem is a consequence of Fr\"oberg's theorem \cite[Theorem 9.2.3]{HHgtm260} and Proposition \ref{Boston}. It characterizes edge ideals with the property that all their bonded powers have linear quotients.

\begin{Theorem}
\label{edge}
Let $G$ be a finite graph and let $\mathfrak{c}=(c_1,\ldots,c_n)\in \mathbb{N}^n$ be a vector with each $c_i > 0$ for each $i=1, 2, \ldots, n$. Then the following conditions are equivalent:
\begin{itemize}
\item[(i)]
$(I(G)^s)_\mathfrak{c}$ has linear quotients for $s = 1,2,\ldots,\delta_{\mathfrak{c}}(I(G))$.
\item[(ii)]
$\overline{G}$ is chordal.
\end{itemize}
\end{Theorem}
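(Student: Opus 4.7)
The plan is to establish the two implications separately, with Proposition~\ref{Boston} doing the heavy lifting for the implication $(ii)\Rightarrow(i)$ and a direct specialization to $s=1$ handling $(i)\Rightarrow(ii)$.

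For $(i)\Rightarrow(ii)$, the key observation is that the hypothesis $c_i>0$ for every $i$ forces $I(G)_{\mathfrak{c}}=I(G)$: each generator $x_ix_j$ of $I(G)$ has exponent vector bounded above by $(1,\ldots,1)\leq^{\#}\mathfrak{c}$, so every edge-generator is already $\mathfrak{c}$-bounded, and conversely $\Gc(I(G)_{\mathfrak{c}})\subseteq\Gc(I(G))$. Taking $s=1$ in (i) therefore says that $I(G)$ itself has linear quotients. Since $I(G)$ is generated in a single degree, linear quotients imply a linear resolution. Fr\"oberg's theorem then gives that $\overline{G}$ is chordal.

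For $(ii)\Rightarrow(i)$, assume $\overline{G}$ is chordal. I would invoke the theorem of Herzog--Hibi--Zheng stating that under this hypothesis the ordinary power $I(G)^s$ has linear quotients for every $s\geq 1$ (a strengthening of the fact, also due to them, that every power has a linear resolution). Applying Proposition~\ref{Boston} to the monomial ideal $I(G)^s$ with the given vector $\mathfrak{c}$, one concludes that $\bigl(I(G)^s\bigr)_{\mathfrak{c}}$ has linear quotients. In particular this holds for $s=1,2,\ldots,\delta_{\mathfrak{c}}(I(G))$, which is the content of (i); for $s>\delta_{\mathfrak{c}}(I(G))$ the ideal $(I(G)^s)_{\mathfrak{c}}$ is zero and the statement is vacuous.

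The main obstacle is identifying the precise ingredient for the nontrivial direction: one really needs linear \emph{quotients} for the ordinary powers $I(G)^s$ (not merely a linear resolution) in order to transfer the property to $\bigl(I(G)^s\bigr)_{\mathfrak{c}}$ via Proposition~\ref{Boston}. Once that result is cited, the rest of the argument is essentially bookkeeping, and the easy direction rests only on the elementary observation that positivity of all components of $\mathfrak{c}$ forces the $\mathfrak{c}$-bounded truncation to recover $I(G)$ intact.
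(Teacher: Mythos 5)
Your proposal is correct and follows exactly the paper's argument: the easy direction uses $I(G)_{\mathfrak{c}}=I(G)$ together with Fr\"oberg's theorem, and the nontrivial direction combines the Herzog--Hibi--Zheng result that all powers of $I(G)$ have linear quotients when $\overline{G}$ is chordal (cited in the paper as \cite[Theorem 10.1.9]{HHgtm260}) with Proposition~\ref{Boston}. You correctly identified that linear quotients, not merely a linear resolution, is the needed input for the ordinary powers.
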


\begin{proof}
(i) $\Rightarrow$ (ii) Since $(I(G))_\mathfrak{c} = I(G)$, the desired results follows from Fr\"oberg's theorem \cite[Theorem 9.2.3]{HHgtm260}.

\smallskip

(ii) $\Rightarrow$ (i) It follows from \cite[Theorem 10.1.9]{HHgtm260} that each power of $I(G)$ has linear quotients if $\overline{G}$ is chordal.  Hence, the desired results follows from Proposition \ref{Boston}.
\end{proof}

Recall that for every graph $G$ and any integer $s\geq 1$, the $s$th squarefree power of $I(G)$ is denoted by $I(G)^{[s]}$. The following corollary is a special case of Theorem \ref{edge}.

\begin{Corollary}
\label{squarefree}
For every graph $G$, the following conditions are equivalent:
\begin{itemize}
\item[(i)]
$I(G)^{[s]}$ has linear quotients for each integer $s = 1,2, \ldots, {\rm match}(G)$.
\item[(ii)]
$\overline{G}$ is chordal.
\end{itemize}
\end{Corollary}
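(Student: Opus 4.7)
The plan is to deduce Corollary \ref{squarefree} as the direct specialization of Theorem \ref{edge} corresponding to the vector $\mathfrak{c} = (1, 1, \ldots, 1) \in \NN^n$. All the hypotheses of Theorem \ref{edge} are satisfied with this choice, since every coordinate $c_i = 1$ is strictly positive, so no preliminary work is required.

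Concretely, I would unpack the two identifications that turn Theorem \ref{edge} into Corollary \ref{squarefree}. First, by definition of the $\mathfrak{c}$-bounded power, $(I(G)^s)_{\mathfrak{c}}$ consists of the monomials in $I(G)^s$ whose exponent vectors are componentwise at most $(1, \ldots, 1)$; these are exactly the squarefree generators of $I(G)^s$, so $(I(G)^s)_{\mathfrak{c}} = I(G)^{[s]}$. Second, as noted in the introduction, $\delta_{\mathfrak{c}}(I(G))$ under $\mathfrak{c} = (1, \ldots, 1)$ is precisely the matching number $\mathrm{match}(G)$, since the largest $k$ for which $I(G)^k$ contains a squarefree monomial coincides with the maximum number of pairwise disjoint edges in $G$.

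With these two identifications in hand, condition (i) of Theorem \ref{edge} becomes ``$I(G)^{[s]}$ has linear quotients for $s = 1, 2, \ldots, \mathrm{match}(G)$'', which is condition (i) of the corollary, while condition (ii) is unchanged. Thus the equivalence (i) $\Leftrightarrow$ (ii) of Theorem \ref{edge} specializes verbatim to the equivalence asserted in the corollary, completing the proof.

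There is no real obstacle here: the corollary is, by design, the $\mathfrak{c} = (1, \ldots, 1)$ case of the more general Theorem \ref{edge}. The only thing to check carefully is the translation between the bounded-power notation $(I^s)_{\mathfrak{c}}$ and the squarefree-power notation $I^{[s]}$, and the matching-number interpretation of $\delta_{\mathfrak{c}}$; both were already recorded in the introduction and Section~\ref{sec2}, so the proof is essentially a single citation of Theorem \ref{edge}.
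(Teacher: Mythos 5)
Your proposal is correct and matches the paper exactly: the corollary is stated there as the special case $\mathfrak{c}=(1,\ldots,1)$ of Theorem \ref{edge}, using precisely the identifications $(I(G)^s)_{\mathfrak{c}}=I(G)^{[s]}$ and $\delta_{\mathfrak{c}}(I(G))={\rm match}(G)$ that you spell out.
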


We now study the highest non-vanishing bounded power of edge ideals. As mentioned in the Introduction, it is known by \cite[Theorem 1 on page 246]{w} that for every graph $G$, the ideal $I(G)^{[{\rm match}(G)]}$ is a matroidal ideal. As a generalization of this result, we prove in Theorem \ref{Essen} that $(I(G)^{\delta_{\mathfrak{c}}(I(G))})_\mathfrak{c}$ is a polymatroidal ideal. Recall that for a vector $\mathfrak{c}=(c_1, \ldots, c_n)$, its absolute value is $|\mathfrak{c}|=c_1+\cdots +c_n$. Also, for a monomial $u$ and a variable $x_i$, the degree of $u$ with respect to $x_i$ is denoted by ${\rm deg}_{x_i}(u)$.

\begin{Theorem}
\label{Essen}
For every vector $\mathfrak{c} \in \mathbb{N}^n$ and for any graph $G$, the ideal $(I(G)^{\delta_{\mathfrak{c}}(I(G))})_\mathfrak{c}$ is a polymatroidal ideal. In particular, it has linear quotients.
\end{Theorem}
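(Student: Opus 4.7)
The plan is to reduce the polymatroidality claim to the known matroidal result \cite[Theorem 1]{w} by passing to a ``blow-up'' graph whose matchings parametrize the generators of $(I(G)^{\delta})_{\mathfrak{c}}$. Writing $\delta = \delta_{\mathfrak{c}}(I(G))$ and (after restricting to the vertices with $c_i \geq 1$, which loses no generality since no $\mathfrak{c}$-bounded monomial uses a variable $x_i$ with $c_i = 0$) introduce the polynomial ring $T = K[x_{i,k} : 1 \leq i \leq n,\, 1 \leq k \leq c_i]$. Let $G^{\mathfrak{c}}$ be the graph on vertex set $\{x_{i,k}\}$ with edges $\{x_{i,k}\, x_{j,\ell} : x_ix_j \in E(G)\}$, and let $\phi\colon T \to S$ be the $K$-algebra map sending $x_{i,k} \mapsto x_i$.

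The first technical step is to show ${\rm match}(G^{\mathfrak{c}}) = \delta$. The inequality $\leq$ is immediate, since any matching of $G^{\mathfrak{c}}$ of size $m$ projects via $\phi$ to a $\mathfrak{c}$-bounded element of $I(G)^m$. For $\geq$, take any generator $u \in \Gc((I(G)^{\delta})_{\mathfrak{c}})$ with a decomposition $u = e_1 \cdots e_\delta$ into edges of $G$; at each vertex $x_i$, fix any bijection between the $\deg_{x_i}(u) \leq c_i$ edges of the decomposition incident to $x_i$ and the indices $\{1,\ldots,\deg_{x_i}(u)\}$, and lift each $e_k = x_ix_j$ to $x_{i,\alpha}\, x_{j,\beta}$ accordingly. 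This produces a size-$\delta$ matching of $G^{\mathfrak{c}}$ whose monomial is the canonical squarefree lift
\[
\tilde{u} \;:=\; \prod_{i=1}^{n} \prod_{k=1}^{\deg_{x_i}(u)} x_{i,k}.
\]
Hence $\tilde{u} \in \Gc(I(G^{\mathfrak{c}})^{[\delta]})$, and by \cite[Theorem 1]{w} applied to $G^{\mathfrak{c}}$ the ideal $I(G^{\mathfrak{c}})^{[\delta]}$ is matroidal.

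To verify the polymatroidal exchange for $(I(G)^{\delta})_{\mathfrak{c}}$, take $u, v \in \Gc((I(G)^{\delta})_{\mathfrak{c}})$ and a variable $x_a$ with $\deg_{x_a}(u) > \deg_{x_a}(v)$. Setting $k_0 := \deg_{x_a}(u)$, the canonical lifts $\tilde{u}, \tilde{v}$ satisfy $x_{a,k_0} \mid \tilde{u}$ and $x_{a,k_0} \nmid \tilde{v}$. Matroidal exchange for $I(G^{\mathfrak{c}})^{[\delta]}$ yields a variable $x_{b,\ell}$ dividing $\tilde{v}$ but not $\tilde{u}$ such that $x_{b,\ell}\, \tilde{u}/x_{a,k_0} \in \Gc(I(G^{\mathfrak{c}})^{[\delta]})$. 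Translating indices, $x_{b,\ell} \nmid \tilde{u}$ gives $\ell > \deg_{x_b}(u)$ and $x_{b,\ell} \mid \tilde{v}$ gives $\ell \leq \deg_{x_b}(v)$, so $\deg_{x_b}(u) < \deg_{x_b}(v)$; applying $\phi$ produces $x_b\, u/x_a \in \Gc((I(G)^{\delta})_{\mathfrak{c}})$, which is precisely the required exchange. The ``in particular'' conclusion then follows from \cite[Theorem 12.6.2]{HHgtm260}.

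The main obstacle is showing that the canonical lift $\tilde{u}$, which is intrinsically defined in terms of $u$ alone, really does lie in the generating set of $I(G^{\mathfrak{c}})^{[\delta]}$ for every generator $u$ simultaneously, so that the matroidal exchange upstairs can be matched with the polymatroidal exchange downstairs through $\phi$. The key observation making this possible is that copy-index assignments at different vertices can be chosen independently, so the specific ``use indices $1, 2, \ldots$ first'' convention can always be realized by an honest matching; once this compatibility is secured, the rest of the argument is bookkeeping between exponent vectors in $S$ and set-theoretic divisibility in $T$.
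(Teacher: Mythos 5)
Your argument is correct, but it follows a genuinely different route from the paper's. The paper verifies the exchange property directly, by induction on $|\mathfrak{c}|$: given generators $u=e_1\cdots e_\delta$ and $v=f_1\cdots f_\delta$ with $\deg_{x_i}(u)>\deg_{x_i}(v)$, it either strips off a common edge or block of edges and invokes the induction hypothesis on a smaller bound vector, or it grows an alternating sequence $e_1=x_{i_1}x_{j_1}$, $f_1=x_{j_1}x_{i_2}$, $e_2=x_{i_2}x_{j_2},\dots$, where at each step the maximality of $\delta$ forces the next vertex to divide the other monomial (otherwise one exhibits a nonzero element of $(I(G)^{\delta+1})_{\mathfrak{c}}$), until the exchange variable is found --- essentially an augmenting-path argument reproving the matroid exchange in the bounded setting. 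You instead invoke the squarefree case \cite[Theorem 1 on page 246]{w} as a black box (the same result the paper cites only as motivation): you pass to the blow-up graph $G^{\mathfrak{c}}$ with $c_i$ copies of each vertex, verify ${\rm match}(G^{\mathfrak{c}})=\delta$, apply the matroidal exchange to the canonical polarization-style lifts $\tilde u,\tilde v$, and push the result back through $\phi$. The two points that need care in your reduction are both handled: the lifts $\tilde u,\tilde v$ really are products of maximum matchings of $G^{\mathfrak{c}}$ because copy indices can be assigned independently at each vertex, and the exchange variable $x_{b,\ell}$ upstairs yields $\deg_{x_b}(u)<\ell\le\deg_{x_b}(v)\le c_b$, which simultaneously gives the required strict inequality and the $\mathfrak{c}$-boundedness of $x_bu/x_a$. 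Your proof is shorter and modular but rests entirely on the cited matroidal theorem; the paper's proof is longer but self-contained, and its induction on $|\mathfrak{c}|$ is the same device reused in the regularity arguments of Section \ref{sec5}.
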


\begin{proof}
Suppose that $\mathfrak{c}=(c_1, \ldots, c_n)$. To simplify the notation, set $I:=I(G)$ and $\delta:=\delta_{\mathfrak{c}}(I(G))$. We use induction on $|\mathfrak{c}|$. The assertion is trivial if $|\mathfrak{c}|\leq 2$. Therefore, assume that $|\mathfrak{c}|\geq 3$. Consider two monomials $u,v\in \Gc((I^{\delta})_{\mathfrak{c}})$ and suppose that ${\rm deg}_{x_i}(u)> {\rm deg}_{x_i}(v)$, for some integer $i$ with $1\leq i\leq n$. We must show that there is a variable $x_j$ with ${\rm deg}_{x_j}(u)< {\rm deg}_{x_j}(v)$ such that $x_j(u/x_i)\in \Gc((I^{\delta})_{\mathfrak{c}})$. Suppose that $u=e_1\cdots e_{\delta}$ and $v=f_1\cdots f_{\delta}$, where $e_1, \ldots, e_{\delta}$ and $f_1, \ldots, f_{\delta}$ are (not necessarily distinct) edges of $G$. First, assume that there are integers $p, q$ with $1\leq p,q\leq \delta$ such that $e_p=f_q$. Then the monomials $u':=u/e_p$ and $v':=v/e_p=v/f_q$ belong to $(I^{\delta-1})_{\mathfrak{c}'}$, where $\mathfrak{c}'=(c_1', \ldots, c_n')$ is the vector defined as follows.
$$c_k'=
\left\{
	\begin{array}{ll}
		c_k  & \mbox{if } x_k \ {\rm is \ not \ incident \ to} \ e_p=f_q \\
		c_k-1 & \mbox{if } x_k \ {\rm is \ incident \ to} \ e_p=f_q
	\end{array}
\right.$$

It is easy to see that $\delta_{\mathfrak{c}'}(I)=\delta-1$. Thus, we deduce from the induction hypothesis that there is a variable $x_j$ such that ${\rm deg}_{x_j}(u')< {\rm deg}_{x_j}(v')$ and $x_j(u'/x_i)\in (I^{\delta-1})_{\mathfrak{c}'}$. This implies that ${\rm deg}_{x_j}(u)< {\rm deg}_{x_j}(v)$ and $x_j(u/x_i)\in (I^{\delta})_{\mathfrak{c}}$. Therefore, we are done in this case. So, suppose that $e_p\neq f_q$, for any pair of integers $p,q$ with $1\leq p,q\leq \delta$.

Set $i_1:=i$. As $x_{i_1}=x_i$ divides $u$, we may assume that $e_1=x_{i_1}x_{j_1}$, for some vertex $x_{j_1}$ of $G$. In particular, $c_{j_1}\geq 1$. If $v$ is not divisible by $x_{j_1}$, then $ve_1\in (I^{\delta+1})_{\mathfrak{c}}$, contradicting the definition of $\delta$. Thus, $x_{j_1}$ divides $v$. So, we may assume that $f_1=x_{j_1}x_{i_2}$, for some vertex $x_{i_2}$ of $G$. If $x_{i_2}$ does not divide $u$, then $x_j:=x_{i_2}$ satisfies the desired condition. So, suppose that $x_{i_2}$ divides $u$. If $x_{i_1}=x_{i_2}$, then $e_1=f_1$, contradicting our assumption. Hence, $x_{i_2}\neq x_{i_1}$. Moreover, since $f_1=x_{j_1}x_{i_2}$ is an edge of $G$, we have $x_{i_2}\neq x_{j_1}$. Consequently, $x_{i_2}$ divides $u/e_1=e_2\cdots e_{\delta}$. Without loss of generality, we may assume that $e_2=x_{i_2}x_{j_2}$, for some vertex $x_{j_2}\neq x_{i_2}$ of $G$. If $x_{j_2}=x_{j_1}$, then $f_1=e_2$ which contradicts our assumption. So, $x_{j_2}\neq x_{j_1}$. If $x_{j_2}$ does not divide $v$, then$$x_{i_1}x_{j_2}v=e_1e_2f_2\cdots f_{\delta}\in (I^{\delta+1})_{\mathfrak{c}},$$contradicting the definition of $\delta$. Therefore, $x_{j_2}$ divides $v$. Since $x_{j_2}\neq x_{i_2}, x_{j_1}$, we conclude that $x_{j_2}$ divides $v/f_1=f_2\cdots f_{\delta}$. Hence, without loss of generality, we may assume that $f_2=x_{j_2}x_{i_3}$, for some vertex $x_{i_3}$ of $G$.

Continuing this process, we may suppose after $k$ steps (by relabeling the edges, if necessary) that for each integer $t=1, \ldots, k$, we have $e_t=x_{i_t}x_{j_t}$ and $f_t=x_{j_t}x_{i_{t+1}}$, where $x_{i_1}, \ldots, x_{i_k}$, as well as, $x_{j_1}, \ldots, x_{j_k}$ are distinct vertices of $G$. We divide the rest of the proof into the following steps.

\vspace{0.3cm}
{\bf Step 1.} Suppose that $x_{i_{k+1}}=x_{i_r}$, for some integer $r$ with $1\leq r\leq k$. Then$$e_r\cdots e_k=x_{i_r}\cdots x_{i_k}x_{j_r}\cdots x_{j_k}=f_r\cdots f_k.$$It follows that the monomials $u'':=u/(e_r\cdots e_k)$ and $v'':=v/f_r\cdots f_k$ belong to the ideal $(I^{\delta-(k-r+1)})_{\mathfrak{c-a}}$, where $\mathfrak{a}$ is the exponent vector of the monomial $e_r\cdots e_k=f_r\cdots f_k$. It is easy to see that $\delta_{\mathfrak{c-a}}(I)=\delta-(k-r+1)$. Thus, we deduce from the induction hypothesis that there is a variable $x_j$ such that ${\rm deg}_{x_j}(u'')< {\rm deg}_{x_j}(v'')$ and $x_j(u''/x_i)\in (I^{\delta-(k-r+1)})_{\mathfrak{c-a}}$. This implies that ${\rm deg}_{x_j}(u)< {\rm deg}_{x_j}(v)$ and $x_j(u/x_i)\in (I^{\delta})_{\mathfrak{c}}$. So, we are done. Therefore, we assume that $x_{i_{k+1}}\neq x_{i_r}$, for each integer $r=1, \ldots, k$.

\vspace{0.3cm}
{\bf Step 2.} Suppose that $x_{i_{k+1}}$ does not divide $u/(e_1\cdots e_k)$. Recall that by Step 1, $x_{i_{k+1}}$ is not equal to the vertices $x_{i_1}, \ldots, x_{i_k}$. Hence, if $x_{i_{k+1}}$ divides $e_{\ell}$, for some integer $\ell$ with $1\leq \ell\leq k$, then $x_{i_{k+1}}=x_{j_{\ell}}$ and thus, it also divides $f_{\ell}$. Moreover, as $f_k=x_{j_k}x_{i_{k+1}}$ is an edge of $G$, we have $x_{i_{k+1}}\neq x_{j_k}$. Therefore, $x_{i_{k+1}}$ does not divide $e_k$. This yields that
\begin{align*}
{\rm deg}_{x_{i_{k+1}}}(u) & ={\rm deg}_{x_{i_{k+1}}}(e_1\cdots e_k)={\rm deg}_{x_{i_{k+1}}}(e_1\cdots e_{k-1}) \leq {\rm deg}_{x_{i_{k+1}}}(f_1\cdots f_{k-1})\\ & < {\rm deg}_{x_{i_{k+1}}}(f_1\cdots f_k)\leq {\rm deg}_{x_{i_{k+1}}}(v).
\end{align*}
Set $x_j:=x_{i_{k+1}}$. By the above computation, we have ${\rm deg}_{x_j}(u)<{\rm deg}_{x_j}(v)$. On the other hand,$$x_ju/x_i=x_{i_{k+1}}u/x_{i_1}=f_1\cdots f_ke_{k+1}\cdots e_{\delta}\in (I^{\delta})_{\mathfrak{c}}.$$Consequently, $x_j$ satisfies the desired condition. Therefore, suppose that $x_{i_{k+1}}$ divides $u/(e_1\cdots e_k)=e_{k+1}\cdots e_{\delta}$. So, without loss of generality, we may suppose that $e_{k+1}=x_{i_{k+1}}x_{j_{k+1}}$, for some vertex $x_{j_{k+1}}$ of $G$.

\vspace{0.3cm}
{\bf Step 3.} Suppose that $x_{j_{k+1}}=x_{j_r}$, for some integer $r$ with $1\leq r\leq k$. Then$$e_{r+1}\cdots e_{k+1}=x_{i_{r+1}}\cdots x_{i_{k+1}}x_{j_{r+1}}\cdots x_{j_{k+1}}=f_r\cdots f_k.$$Thus, a similar argument as in Case 1 implies the assertion. So, we assume that $x_{j_{k+1}}\neq x_{j_r}$, for each integer $r=1, \ldots, k$.

\vspace{0.3cm}
{\bf Step 4.} Suppose that $x_{j_{k+1}}$ does not divide $v/(f_1\cdots f_k)$. As $e_{k+1}=x_{i_{k+1}}x_{j_{k+1}}$ is an edge of $G$, we have $x_{j_{k+1}}\neq x_{i_{k+1}}$. Moreover, note that$$e_1\cdots e_k=\frac{x_{i_1}(f_1\cdots f_k)}{x_{i_{k+1}}}.$$ Since $x_{j_{k+1}}$ does not divide $v/(f_1\cdots f_k)$ and $x_{j_{k+1}}\neq x_{i_{k+1}}$, we deduce from the above equality that
\begin{align*}
& {\rm deg}_{x_{j_{k+1}}}(v)={\rm deg}_{x_{j_{k+1}}}(f_1\cdots f_k)\leq {\rm deg}_{x_{j_{k+1}}}(e_1\cdots e_k) < \\ & {\rm deg}_{x_{j_{k+1}}}(e_1\cdots e_ke_{k+1})\leq {\rm deg}_{x_{j_{k+1}}}(u)\leq c_{j_{k+1}}.
\end{align*}
Furthermore, if $x_{j_{k+1}}=x_{i_1}$, in the above chain of inequalities, the first inequality will be strict. So, in this case, ${\rm deg}_{x_{j_{k+1}}}(v)< c_{j_{k+1}}-1$. In any case,$$x_{i_1}x_{j_{k+1}}v=e_1 \cdots e_{k+1}f_{k+1}\cdots f_{\delta}\in (I^{\delta+1})_{\mathfrak{c}},$$contradicting the definition of $\delta$. Hence, $x_{j_{k+1}}$ divides $v/(f_1\cdots f_k)=f_{k+1}\cdots f_{\delta}$. Without loss of generality, we may assume that $f_{k+1}=x_{j_{k+1}}x_{i_{k+2}}$, for some vertex $x_{i_{k+2}}$ of $G$. Therefore, the algorithm continues in this case. However, since ${\rm deg}(u)={\rm deg}(v)=\delta$ is a finite number, this process should stop. This means that in the last stage of the algorithm, only one of the Steps 1, 2 or 3 can occur, and this completes the proof.

The last part of the theorem follows from \cite[Theorem 12.6.2]{HHgtm260} and the first part.
\end{proof}

The following corollary is an immediate consequence of Theorem \ref{Essen}.

\begin{Corollary} \label{linres}
For every vector $\mathfrak{c} \in \mathbb{N}^n$ and for any graph $G$, the ideal $(I(G)^{\delta_{\mathfrak{c}}(I(G))})_\mathfrak{c}$ has a linear resolution.
\end{Corollary}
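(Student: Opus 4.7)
The plan is to deduce this directly from Theorem \ref{Essen}, which does all the substantive work. That theorem shows that $(I(G)^{\delta_{\mathfrak{c}}(I(G))})_{\mathfrak{c}}$ is polymatroidal, and by \cite[Theorem 12.6.2]{HHgtm260} every polymatroidal ideal has linear quotients. Thus I immediately obtain linear quotients for the ideal in question.

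To pass from linear quotients to a linear resolution, I would invoke \cite[Proposition 8.2.1]{HHgtm260}, which applies to any monomial ideal with linear quotients that is generated in a single degree. Writing $\delta:=\delta_{\mathfrak{c}}(I(G))$, equigeneration is clear: $I(G)$ is generated in degree $2$, so every element of $\Gc(I(G)^{\delta})$ has degree $2\delta$, and by definition $\Gc\bigl((I(G)^{\delta})_{\mathfrak{c}}\bigr)$ is the subset of $\Gc(I(G)^{\delta})$ consisting of $\mathfrak{c}$-bounded monomials, all of which therefore have the common degree $2\delta$.

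There is no real obstacle at this stage: all the hard combinatorial content sits inside Theorem \ref{Essen}, and the corollary is a one-line consequence combining two well-known facts about polymatroidal ideals and equigenerated ideals with linear quotients. The only verification worth mentioning is that the bounded ideal remains equigenerated, which is immediate from the definition of $I_{\mathfrak{c}}$.
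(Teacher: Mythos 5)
Your proposal is correct and matches the paper's route exactly: the paper states the corollary as an immediate consequence of Theorem \ref{Essen}, relying on polymatroidal $\Rightarrow$ linear quotients (\cite[Theorem 12.6.2]{HHgtm260}) and, for an equigenerated ideal, linear quotients $\Rightarrow$ linear resolution (\cite[Proposition 8.2.1]{HHgtm260}). Your explicit check that the generators all have degree $2\delta_{\mathfrak{c}}(I(G))$ is the only detail the paper leaves implicit, and you handle it correctly.
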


\begin{Remark}
In view of Theorem \ref{Essen}, one may expect that for any monomial ideal $I$ and any vector $\mathfrak{c}\in \NN^n$, the ideal $(I^{\delta_{\mathfrak{c}}(I)})_{\mathfrak{c}}$ has linear quotients. However, as is well-known, it is false in general. For example, let $I=(x_1x_2x_3, x_1x_4x_5)\subset S=K[x_1, \ldots,x_5]$ and set $\mathfrak{c}=(1,\dots,1)$. Then $\delta_{\mathfrak{c}}(I)=1$, but it is clear that $I$ does not have linear quotients.
\end{Remark}

\section{Edge ideals and regularity} \label{sec5}

In this section, we study the regularity of the bounded powers of edge ideals. Let $G$ be a graph and assume that $\mathfrak{c}$ is a vector in $\NN^n$. As the main result of this section, we prove in Theorem \ref{regmain} that for each integer $s=1, 2, \ldots, \delta_{\mathfrak{c}}(I(G))$, the regularity of $(I(G)^s)_{\mathfrak{c}}$ is bounded above by $\delta_{\mathfrak{c}}(I(G))+s$. The proof is a modification of the proof of \cite[Theorem 3.2]{s3}. A big difference is that here the ideals involved in the proof are not necessarily squarefree. For this reason, we apply the machinery of polarization.

We first need the following lemma, which determines a suitable ordering for the minimal monomial generators of bounded powers of edge ideals.

\begin{Proposition} \label{rfirst}
Assume that $G$ is a graph and $s\leq \delta_{\mathfrak{c}}(I(G))-1$ is a positive integer. Then the monomials in $\Gc((I(G)^s)_{\mathfrak{c}})$ can be labeled as $u_1, \ldots, u_m$ such that for every pair of integers $1\leq j< i\leq m$, one of the following conditions holds.
\begin{itemize}
\item [(i)] $(u_j:u_i) \subseteq ((I(G)^{s+1})_{\mathfrak{c}}:u_i)$; or
\item [(ii)] there exists an integer $r\leq i-1$ such that $(u_r:u_i)$ is generated by a variable, and $(u_j:u_i)\subseteq (u_r:u_i)$.
\end{itemize}
\end{Proposition}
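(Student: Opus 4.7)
Let $J := (I(G)^s)_\mathfrak{c}$ and $J' := (I(G)^{s+1})_\mathfrak{c}$. I would adapt the proof of \cite[Theorem 3.2]{s3} (which establishes an analogous ordering for the squarefree powers $I(G)^{[s]}$) to the bounded-power setting, using the polarization machinery flagged in the opening of Section \ref{sec5}. First I would reformulate the two conditions: condition (i) for a pair $(u_j, u_i)$ is equivalent to $\operatorname{lcm}(u_j, u_i) \in J'$, while condition (ii) is equivalent to the existence of variables $x, y$ of $S$ with $y \mid u_i$, $x \mid u_j/\gcd(u_j,u_i)$, and $u_r := x u_i / y \in \Gc(J)$ preceding $u_i$ in the ordering.

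\textbf{The ordering.} I would polarize in the larger ring $T := K[x_{i,k} \mid 1 \le i \le n,\ 1 \le k \le c_i]$; the minimal generators of $J$ correspond bijectively via $u \mapsto u^{\mathrm{pol}}$ to those of the squarefree ideal $J^{\mathrm{pol}}$. Fixing a linear order on the variables of $T$ (for instance $x_{1,1} < \cdots < x_{1,c_1} < x_{2,1} < \cdots < x_{n,c_n}$) and using the induced lex order on $\Gc(J^{\mathrm{pol}})$ transports to an ordering on $\Gc(J)$. This parallels the scheme of \cite[Theorem 3.2]{s3}, where the squarefree analogs are ordered lexicographically.

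\textbf{Verification.} For each pair $j < i$, pick a variable $x$ dividing $u_j/\gcd(u_j,u_i)$. If $x u_i \in J'$, then because $x \mid u_j/\gcd(u_j,u_i)$ the monomial $\operatorname{lcm}(u_j,u_i)$ is a multiple of $x u_i$, so $\operatorname{lcm}(u_j,u_i) \in J'$, giving (i). Otherwise I would run the alternating-path construction from the proof of Theorem \ref{Essen}: writing $u_i = e_1 \cdots e_s$ and $u_j = f_1 \cdots f_s$, build vertices $x = x_{i_1}, x_{j_1}, x_{i_2}, x_{j_2}, \ldots$ with $e_t = x_{i_t} x_{j_t}$ and $f_t = x_{j_t} x_{i_{t+1}}$. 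As in that proof, termination is forced by $\deg(u_i) = \deg(u_j) = 2s$; at termination one either lands at a $y \mid u_i$ such that $u_r := x u_i / y \in \Gc(J)$ and $u_r$ lex-precedes $u_i$ (giving (ii)), or closes up into a certificate that some $(s{+}1)$-edge extension of $u_i$ is $\mathfrak{c}$-bounded (giving (i)).

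\textbf{Main obstacle.} The delicate point is ensuring the ordering we chose is compatible with the exchange, i.e., that the produced $u_r$ really lex-precedes $u_i$. This is where polarization is essential: at the level of the blown-up graph $\tilde G$ (vertices $x_{i,k}$, with $x_{a,b} x_{c,d}$ an edge whenever $x_a x_c \in E(G)$), every generator of $J^{\mathrm{pol}}$ is squarefree, and the matching-exchange argument of \cite[Theorem 3.2]{s3} applies directly. Depolarization $x_{i,k} \mapsto x_i$ then preserves both $J'$-membership and the colon structure, so conditions (i) and (ii) in $T$ descend to $S$. The remaining subtlety is swaps $x_{i,k} \leftrightarrow x_{i,k'}$ in $T$ sharing the first index, which collapse under depolarization; these have to be absorbed by extending the alternating path one extra step until the effective swap in $S$ involves two distinct $S$-variables.
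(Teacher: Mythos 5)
There is a genuine gap here, and the route you chose is considerably harder than what is needed. The paper's proof is a short \emph{restriction} argument: by \cite[Theorem 4.12]{b} the generators of the full power $I(G)^s$ already admit a labeling $v_1,\ldots,v_t$ satisfying the exact analogues of (i) and (ii), with $(I(G)^{s+1}:v_i)$ in place of $((I(G)^{s+1})_{\mathfrak{c}}:u_i)$. One then takes the induced sublabeling on the $\mathfrak{c}$-bounded generators and checks two easy facts: in case (i) the monomial $u\,v_{\ell_i}=\lcm(v_{\ell_j},v_{\ell_i})$ is the lcm of two $\mathfrak{c}$-bounded monomials, hence $\mathfrak{c}$-bounded, hence lies in $(I(G)^{s+1})_{\mathfrak{c}}$; and in case (ii) the witness $v_k=x_pv_{\ell_i}/x_q$ divides $x_pv_{\ell_i}$, which is again $\mathfrak{c}$-bounded, so $v_k$ survives into $\Gc((I(G)^s)_{\mathfrak{c}})$ as some earlier $u_r$. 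You never invoke the known ordering for $I(G)^s$ and instead try to rebuild it from scratch, and that is where the problems accumulate.

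Concretely: (1) your claim that after polarization ``the matching-exchange argument of \cite[Theorem 3.2]{s3} applies directly'' at the level of the blown-up graph is unjustified, because $((I(G)^s)_{\mathfrak{c}})^{\rm pol}$ is \emph{not} the $s$th squarefree power of the edge ideal of that graph: polarization is left-justified, so $\Gc(J^{\rm pol})$ is in general a proper subset of the set of $s$-matchings of the blown-up graph, and an ordering of the larger set does not hand you an ordering of $\Gc(J^{\rm pol})$ with properties (i)--(ii) without exactly the kind of restriction lemma you are trying to avoid. (2) The termination dichotomy you import from the proof of Theorem \ref{Essen} is established there only for $s=\delta_{\mathfrak{c}}(I(G))$, where maximality forces every one-edge extension to violate $\mathfrak{c}$-boundedness; for $s\leq\delta_{\mathfrak{c}}(I(G))-1$ the alternating path can stop for other reasons, and you do not show that the outcome is always (i) or (ii), nor that the produced $u_r$ actually precedes $u_i$ in your lex order. (3) The closing ``subtlety'' about swaps $x_{i,k}\leftrightarrow x_{i,k'}$ collapsing under depolarization is precisely the point where the argument could fail, and ``absorbed by extending the path one extra step'' is an assertion, not a proof. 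As written, the proposal is a plan whose hard steps are all left open; the statement follows much more directly by restricting Banerjee's ordering and verifying $\mathfrak{c}$-boundedness of the two witnesses.
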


\begin{proof}
Using \cite[Theorem 4.12]{b}, the elements of $\Gc(I(G)^s)$ can be labeled as $v_1, \ldots, v_t$ such that for every pair of integers $1\leq j< i\leq t$, one of the following conditions holds.
\begin{itemize}
\item [(1)] $(v_j:v_i) \subseteq (I(G)^{s+1}:v_i)$; or
\item [(2)] there exists an integer $k\leq i-1$ such that $(v_k:v_i)$ is generated by a variable, and $(v_j:v_i)\subseteq (v_k:v_i)$.
\end{itemize}

Since $\Gc((I(G)^s)_\mathfrak{c})\subseteq \Gc(I(G)^s)$, there exist positive integers $\ell_1, \ldots, \ell_m$ such that $\Gc((I(G)^s)_\mathfrak{c})=\{v_{\ell_1}, \ldots, v_{\ell_m}\}$. For every integer $k$ with $1\leq k\leq m$, set $u_k:=v_{\ell_k}$. We claim that this labeling satisfies the desired property. To prove the claim, we fix integers $i$ and $j$ with $1\leq j< i\leq m$. Based on properties (1) and (2) above, we divide the rest of the proof into two cases.

\vspace{0.3cm}
{\bf Case 1.} Assume that $(v_{\ell_j}:v_{\ell_i}) \subseteq (I(G)^{s+1}:v_{\ell_i})$. Recall that $v_{\ell_i}$ and $v_{\ell_j}$ are $\mathfrak{c}$-bounded monomials. So, one can easily check that $(v_{\ell_j}:v_{\ell_i})=(u)$, for some monomial $u$ with the property that $uv_{\ell_i}$ is a $\mathfrak{c}$-bounded monomial. Since $u\in (I(G)^{s+1}:v_{\ell_i})$, we conclude that $uv_{\ell_i}\in (I(G)^{s+1})_{\mathfrak{c}}$. Consequently,$$(u_j:u_i)=(v_{\ell_j}:v_{\ell_i})=(u)\subseteq ((I(G)^{s+1})_{\mathfrak{c}}: v_{\ell_i})=((I(G)^{s+1})_{\mathfrak{c}}: u_i).$$

\vspace{0.3cm}
{\bf Case 2.} Assume that there exists an integer $k\leq \ell_i-1$ such that $(v_k:v_{\ell_i})$ is generated by a variable, and $(v_{\ell_j}:v_{\ell_i})\subseteq (v_k:v_{\ell_i})$. Hence, $(v_k:v_{\ell_i})=(x_p)$, for some integer $p$ with $1\leq p\leq n$. It follows from the inclusion  $(v_{\ell_j}:v_{\ell_i})\subseteq (v_k:v_{\ell_i})$ that $x_p$ divides $v_{\ell_j}:v_{\ell_i}$. Consequently, $v_{\ell_i}x_p$ is a $\mathfrak{c}$-bounded monomial. As ${\rm deg}(v_k)={\rm deg}(v_{\ell_i})$, it follows from $(v_k:v_{\ell_i})=x_p$ that there is a variable $x_q$ dividing $v_{\ell_i}$ such that $v_k=x_pv_{\ell_i}/x_q$. This implies that $v_k$ is a $\mathfrak{c}$-bounded monomial. Hence, $v_k=v_{\ell_r}=u_r$, for some integer $r$ with $1\leq r\leq m$. Using $k\leq \ell_i-1$, we have $\ell_r\leq \ell_i-1$. Therefore, $r\leq i-1$ and$$(u_j:u_i)\subseteq(u_r:u_i)=(x_p).$$ This completes the proof.
\end{proof}

Using Proposition \ref{rfirst}, we obtain the following result, which provides a method to bound the regularity of $((I(G)^{s+1})_\mathfrak{c})$.

\begin{Theorem} \label{regcol}
Assume that $G$ is a graph and $s\leq \delta_{\mathfrak{c}}(I(G))-1$ is a positive integer. Let $\Gc((I(G)^s)_{\mathfrak{c}})=\{u_1, \ldots, u_m\}$ denote the set of minimal monomial generators of $(I(G)^s)_{\mathfrak{c}}$. Then$${\rm reg}((I(G)^{s+1})_\mathfrak{c})\leq \max\bigg\{\Big\{{\rm reg}\big((I(G)^{s+1})_\mathfrak{c}:u_i\big)+2s, 1\leq i\leq m\Big\}, {\rm reg}\big((I(G)^s)_\mathfrak{c}\big)\bigg\}.$$
\end{Theorem}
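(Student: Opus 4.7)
The plan is to set up a filtration interpolating between $(I(G)^{s+1})_\mathfrak{c}$ and $(I(G)^s)_\mathfrak{c}$ and apply short exact sequences to control the regularity. Write $J := (I(G)^{s+1})_\mathfrak{c}$ and $I' := (I(G)^s)_\mathfrak{c}$. First observe that $J \subseteq I'$: every $\mathfrak{c}$-bounded monomial in $I(G)^{s+1}$ also lies in $I(G)^s$ and is still $\mathfrak{c}$-bounded. Label the minimal generators of $I'$ as $u_1 \prec \cdots \prec u_m$ using the ordering supplied by Proposition \ref{rfirst}, and define $J_0 := J$ and $J_i := J + (u_1, \ldots, u_i)$ for $1 \leq i \leq m$, so that $J_m = I'$. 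Each $u_i$ has degree $2s$ as a product of $s$ edges.

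For each $1 \leq i \leq m$, the short exact sequence
$$0 \to \bigl(S/(J_{i-1}:u_i)\bigr)(-2s) \xrightarrow{\cdot u_i} S/J_{i-1} \to S/J_i \to 0$$
yields the standard bound $\reg(J_{i-1}) \leq \max\{\reg(J_{i-1}:u_i) + 2s,\ \reg(J_i)\}$. Iterating from $i = 1$ to $m$ produces
$$\reg(J) \leq \max_{1 \leq i \leq m}\bigl\{\reg(J_{i-1}:u_i) + 2s,\ \reg(I')\bigr\}.$$

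To match the bound stated in the theorem, it suffices to prove $\reg(J_{i-1}:u_i) \leq \reg(J:u_i)$ for each $i$. Expanding, $J_{i-1}:u_i = (J:u_i) + \sum_{j<i}(u_j:u_i)$. The dichotomy of Proposition \ref{rfirst} asserts that each summand $(u_j:u_i)$ is either already contained in $J:u_i$ (and is therefore redundant), or is contained in a principal ideal $(x_p) = (u_r:u_i)$ with $r < i$; in the latter case $(x_p)$ itself appears as a summand of the colon expansion. Consequently $J_{i-1}:u_i = (J:u_i) + L$, where $L$ is generated by a set of variables $\{x_{p_1}, \ldots, x_{p_k}\}$.

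The main obstacle is the final step: verifying that $\reg\bigl((J:u_i) + L\bigr) \leq \reg(J:u_i)$. Iterating over the generators of $L$, this reduces to the single-variable claim $\reg(I + (x)) \leq \reg(I)$ for a monomial ideal $I$ and a variable $x$, which follows from the short exact sequence $0 \to S/(I:x)(-1) \to S/I \to S/(I, x) \to 0$ combined with the classical inequality $\reg(I:x) \leq \reg(I)$ for monomial ideals. Because $J$ and $J:u_i$ need not be squarefree, I would verify this last inequality by polarization: passing to the squarefree ideals $J^{\mathrm{pol}}$ and $(J:u_i)^{\mathrm{pol}}$ in a larger polynomial ring $T$, establishing the analogous colon inequality in the squarefree setting, and transferring back via the regularity preservation $\reg(I^{\mathrm{pol}}) = \reg(I)$ from \cite[Corollary 1.6.3]{HHgtm260}. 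Assembling these pieces with the iterated short-exact-sequence bound yields the theorem.
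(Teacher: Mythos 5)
Your argument is correct and follows the paper's proof essentially verbatim: the same filtration $I_i=(I(G)^{s+1})_\mathfrak{c}+(u_1,\dots,u_i)$, the same short exact sequences $0\to S/(I_{i-1}:u_i)(-2s)\to S/I_{i-1}\to S/I_i\to 0$, and the same use of Proposition \ref{rfirst} to identify $I_{i-1}:u_i$ with $((I(G)^{s+1})_\mathfrak{c}:u_i)$ plus an ideal generated by variables. The only difference is that where you sketch a proof that adding variables does not increase regularity (via the classical inequality $\reg(I:x)\le \reg(I)$ for monomial ideals), the paper simply invokes \cite[Lemma 2.10]{b} for exactly this fact.
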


\begin{proof}
Without loss of generality, we may assume that the labeling $u_1, \ldots, u_m$ of elements of $\Gc((I(G)^s)_{\mathfrak{c}})$ satisfies conditions (i) and (ii) of Proposition \ref{rfirst}. This implies that for every integer $i\geq 2$,
\begin{align*}
\big(((I(G)^{s+1})_\mathfrak{c}, u_1, \ldots, u_{i-1}):u_i\big)=((I(G)^{s+1})_\mathfrak{c}:u_i)+({\rm some \ variables}).
\end{align*}
Hence, we conclude from \cite[Lemma 2.10]{b} that
\[
\begin{array}{rl}
{\rm reg}\big(((I(G)^{s+1})_\mathfrak{c}, u_1, \ldots, u_{i-1}):u_i\big) \leq {\rm reg}((I(G)^{s+1})_\mathfrak{c}:u_i).
\end{array} \tag{1} \label{1}
\]

For every integer $i$ with $0\leq i\leq m$, set $I_i:=((I(G)^{s+1})_\mathfrak{c}, u_1, \ldots, u_i)$. In particular, $I_0=(I(G)^{s+1})_\mathfrak{c}$ and $I_m=(I(G)^s)_\mathfrak{c}$. Consider the exact sequence
$$0\rightarrow S/(I_{i-1}:u_i)(-2s)\rightarrow S/I_{i-1}\rightarrow S/I_i\rightarrow 0,$$
for every $1\leq i\leq m$. It follows that$${\rm reg}(I_{i-1})\leq \max \big\{{\rm reg}(I_{i-1}:u_i)+2s, {\rm reg}(I_i)\big\}.$$Therefore,
\begin{align*}
& {\rm reg}((I(G)^{s+1})_{\mathfrak{c}})={\rm reg}(I_0)\leq \max\Big\{\{{\rm reg}(I_{i-1}:u_i)+2s, 1\leq i\leq m\}, {\rm reg}(I_m)\Big\}\\ & =\max\Big\{\big\{{\rm reg}(I_{i-1}:u_i)+2s, 1\leq i\leq m\big\}, {\rm reg}((I(G)^s)_{\mathfrak{c}})\Big\}.
\end{align*}
The assertion now follows from inequality (\ref{1}).
\end{proof}

Using Theorem \ref{regcol}, in order to bound the regularity of $(I(G)^{s+1})_{\mathfrak{c}}$, we need to study colon ideals of the form $((I(G)^{s+1})_{\mathfrak{c}}:u)$, where $u$ is a monomial in $\Gc((I(G)^s)_{\mathfrak{c}})$. In the following lemma, we show that these ideals are always quadratic monomial ideals.

\begin{Lemma} \label{deg2}
Assume that $G$ is a graph and $s\leq \delta_{\mathfrak{c}}(I(G))-1$ is a positive integer. Then for every monomial $u\in \Gc((I(G)^s)_{\mathfrak{c}})$, the ideal $((I(G)^{s+1})_{\mathfrak{c}}:u)$ is a monomial ideal generated in degree two.
\end{Lemma}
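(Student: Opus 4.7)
My plan is to reduce the statement to Banerjee's classical quadratic generation result: for the edge ideal $I=I(G)$ and a minimal generator $u\in\Gc(I^s)$, the colon ideal $(I^{s+1}:u)$ is generated in degree $2$. This result lives in \cite{b} and already underlies \cite[Theorem 4.12]{b} invoked in Proposition \ref{rfirst}. The single extra ingredient needed to cross from the unrestricted setting to the $\mathfrak{c}$-bounded setting is the trivial observation that $\mathfrak{c}$-boundedness is inherited by divisors.

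First I would set $J:=((I(G)^{s+1})_{\mathfrak{c}}:u)$ and rule out linear generators. Since $u$ is a minimal generator of $(I(G)^s)_{\mathfrak{c}}$, it is in particular a minimal generator of $I(G)^s$ (any $\mathfrak{c}$-bounded monomial in $I(G)^s$ is divisible by some $v\in\Gc(I(G)^s)$, and $v$ is automatically $\mathfrak{c}$-bounded because it divides a $\mathfrak{c}$-bounded monomial); hence $\deg(u)=2s$. Any monomial $w\in J$ satisfies $uw\in (I(G)^{s+1})_{\mathfrak{c}}\subseteq I(G)^{s+1}$, and every element of $I(G)^{s+1}$ has degree at least $2s+2$, forcing $\deg(w)\geq 2$.

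Next I would show that every minimal monomial generator $w$ of $J$ has degree exactly $2$. From $uw\in I(G)^{s+1}$ we get $w\in(I(G)^{s+1}:u)$; by Banerjee's quadratic generation there exists a degree-$2$ monomial $w_0$ with $w_0\mid w$ and $uw_0\in I(G)^{s+1}$. Because $uw$ is $\mathfrak{c}$-bounded and $uw_0$ divides $uw$, the monomial $uw_0$ is itself $\mathfrak{c}$-bounded; consequently $uw_0\in (I(G)^{s+1})_{\mathfrak{c}}$, so $w_0\in J$, and minimality of $w$ forces $w=w_0$.

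The main obstacle, to the extent there is one, is citational rather than mathematical: one must pin down the exact form of the degree-$2$ generation statement for $(I(G)^{s+1}:u)$ with $u$ an arbitrary (not necessarily squarefree) minimal generator of $I(G)^s$, and verify it is already available in \cite{b}. Once that is in hand, the bounded version drops out immediately, because the additional $\mathfrak{c}$-bounded constraint only carves out a subideal and is automatically preserved under taking divisors of an already $\mathfrak{c}$-bounded monomial.
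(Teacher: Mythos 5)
Your proof is correct and follows essentially the same route as the paper: take a minimal generator $w$ of the colon ideal, invoke Banerjee's quadratic generation (the paper cites \cite[Theorem 6.1]{b}) to extract a degree-two divisor $v$ of $w$ lying in $(I(G)^{s+1}:u)$, observe that $uv$ divides the $\mathfrak{c}$-bounded monomial $uw$ and is therefore $\mathfrak{c}$-bounded, and conclude $w=v$ by minimality. Your additional checks (that $u$ remains a minimal generator of $I(G)^s$ and that no linear generators can occur) are correct and only make explicit what the paper leaves implicit.
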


\begin{proof}
Let $w$ be a monomial in the set of minimal monomial generators of the ideal $((I(G)^{s+1})_{\mathfrak{c}}:u)$. In particular, $uw$ is a $\mathfrak{c}$-bounded monomial. Since$$w\in ((I(G)^{s+1})_{\mathfrak{c}}:u)\subseteq (I(G)^{s+1}:u),$$ it follows from \cite[Theorem 6.1]{b} that there is a quadratic monomial $v\in (I(G)^{s+1}:u)$ which divides $w$. Since $uv$ divides $uw$, we deduce that $uv$ is a $\mathfrak{c}$-bounded monomial and therefore, $v\in ((I(G)^{s+1})_{\mathfrak{c}}:u)$. Thus, we conclude from $w\in \Gc((I(G)^{s+1})_{\mathfrak{c}}:u)$ that $w=v$. Hence, $((I(G)^{s+1})_{\mathfrak{c}}:u)$ is a quadratic monomial ideal.
\end{proof}

The following corollary is a consequence of Lemma \ref{deg2} and determines the set of minimal monomial generators of the ideal $((I(G)^{s+1})_{\mathfrak{c}}:u)$.

\begin{Corollary} \label{graphh}
Let $G$ be a graph and $s\leq \delta_{\mathfrak{c}}(I(G))-1$ be a positive integer. Also, let $u=e_1\ldots e_s\in \Gc((I(G)^s)_{\mathfrak{c}})$ be a product of $s$ (not necessarily distinct) edges of $G$. Then $((I(G)^{s+1})_{\mathfrak{c}}:u)$ is generated by all quadratic monomials $x_ix_j$ (it is possible that $i=j$) such that $ux_ix_j$ is a $\mathfrak{c}$-bounded monomial and at least one the following conditions holds.
\begin{itemize}
\item[(i)] $x_i$ and $x_j$ are adjacent in $G$; or
\item[(ii)] $x_i$ and $x_j$ are even-connected in $G$ with respect to $e_1\ldots e_s$.
\end{itemize}
\end{Corollary}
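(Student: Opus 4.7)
The plan is to combine Lemma~\ref{deg2} with the classical Banerjee description of the colon ideal $(I(G)^{s+1}:u)$, i.e.\ \cite[Theorem 6.1]{b}, which is the tool already invoked in the proof of Lemma~\ref{deg2}. By Lemma~\ref{deg2}, the ideal $((I(G)^{s+1})_{\mathfrak{c}}:u)$ is generated in degree two, so the task reduces to identifying exactly which quadratic monomials $x_ix_j$ (with $i=j$ allowed) appear in the minimal generating set.

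Next, I would observe that a quadratic monomial $x_ix_j$ lies in $((I(G)^{s+1})_{\mathfrak{c}}:u)$ if and only if the product $ux_ix_j$ belongs to $(I(G)^{s+1})_{\mathfrak{c}}$, which in turn decomposes into two independent requirements: (1) $ux_ix_j \in I(G)^{s+1}$, equivalently $x_ix_j \in (I(G)^{s+1}:u)$; and (2) $ux_ix_j$ is $\mathfrak{c}$-bounded. Requirement (2) is exactly the $\mathfrak{c}$-boundedness condition asserted in the statement, and requirement (1) is controlled by \cite[Theorem 6.1]{b}, which characterizes the quadratic generators of $(I(G)^{s+1}:u)$ as precisely those $x_ix_j$ for which either $\{x_i,x_j\} \in E(G)$ or $x_i$ and $x_j$ are even-connected in $G$ with respect to $e_1\cdots e_s$. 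Conjoining (1) and (2) yields the characterization of the corollary.

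The only point requiring a brief argument is that the displayed set of quadratic monomials actually coincides with the \emph{minimal} generating set of $((I(G)^{s+1})_{\mathfrak{c}}:u)$, rather than merely generating it. This is immediate from Lemma~\ref{deg2}: since the colon ideal is generated in degree two, no generator can be properly divisible by any other (two distinct quadratic monomials cannot divide one another), so any generating set consisting of quadratic monomials in the ideal is forced to contain $\Gc(((I(G)^{s+1})_{\mathfrak{c}}:u))$, and conversely every minimal generator must appear among the quadratic monomials satisfying the two conditions above.

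I do not anticipate a substantive obstacle: the content of the corollary is essentially the intersection of Banerjee's even-connection description with the $\mathfrak{c}$-bounded constraint. The mildly delicate step is checking that no further quadratic relations are introduced when passing from $(I(G)^{s+1}:u)$ to $((I(G)^{s+1})_{\mathfrak{c}}:u)$ (or lost through the $\mathfrak{c}$-bounded restriction on $ux_ix_j$), but this follows from the direct ``iff'' at the start of the second paragraph and does not involve any additional combinatorial input.
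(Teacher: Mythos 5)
Your proof is correct and follows essentially the same route as the paper: reduce to quadratic generators via Lemma~\ref{deg2}, then characterize membership of $x_ix_j$ by intersecting Banerjee's even-connection description of $(I(G)^{s+1}:u)$ with the $\mathfrak{c}$-boundedness of $ux_ix_j$. The only quibble is a citation: the even-connection characterization of the quadratic generators is \cite[Theorems 6.5 and 6.7]{b}, not Theorem 6.1, which only asserts generation in degree two and is the input already used in Lemma~\ref{deg2}.
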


\begin{proof}
By Lemma \ref{deg2}, the ideal $((I(G)^{s+1})_{\mathfrak{c}}:u)$ is a quadratic monomial ideal. Note that a monomial $x_ix_j$ belongs to $\Gc((I(G)^{s+1})_{\mathfrak{c}}:u)$ if and only if $ux_ix_j$ is a $\mathfrak{c}$-bounded monomial and $x_ix_j\in (I(G)^{s+1}:u)$. The assertion now follows from \cite[Theorems 6.5 and 6.7]{b}.
\end{proof}

The following lemma, is the main step in the proof of Theorem \ref{regmain}. It provides an upper bound for the regularity of the ideals in the form $((I(G)^s)_{\mathfrak{c}}:u)$, where $u$ is a minimal monomial generator of $(I(G)^{s-1})_{\mathfrak{c}}$.

\begin{Lemma} \label{colon}
Let $G$ be a graph and let $s$ be a positive integer with $2\leq s\leq \delta_{\mathfrak{c}}(I(G))$. Then for any monomial $u\in \Gc((I(G)^{s-1})_{\mathfrak{c}})$, we have$${\rm reg}\big((I(G)^s)_{\mathfrak{c}}:u\big)\leq \delta_{\mathfrak{c}}(I(G))-s+2.$$
\end{Lemma}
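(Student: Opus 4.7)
The plan is to reduce the regularity computation to a matching-number question via polarization, and then to bound that matching number using the maximality property of $\delta_{\mathfrak{c}}$. By Lemma \ref{deg2}, the ideal $J := ((I(G)^s)_{\mathfrak{c}} : u)$ is generated in degree two, so its polarization $J^{{\rm pol}}$ is a squarefree quadratic monomial ideal in an extended polynomial ring $T$; thus $J^{{\rm pol}} = I(H)$ for a simple graph $H$ on the variables of $T$. Since polarization preserves Castelnuovo--Mumford regularity, ${\rm reg}(J) = {\rm reg}(I(H))$, and the H\`a--Van Tuyl theorem yields ${\rm reg}(I(H)) \leq {\rm match}(H) + 1$. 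It therefore suffices to prove ${\rm match}(H) \leq \delta_{\mathfrak{c}}(I(G)) - s + 1$.

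Given a matching $\{f_1, \ldots, f_k\}$ in $H$, I would depolarize each $f_t$ to a quadratic monomial $\bar{f}_t \in S$ (possibly of the form $x_i^2$ when $f_t$ is the polarized image of such a generator) and verify that
\[
w := u \cdot \bar{f}_1 \cdots \bar{f}_k \in (I(G)^{s-1+k})_{\mathfrak{c}}.
\]
Once this is established, the defining property of $\delta_{\mathfrak{c}}(I(G))$ forces $s - 1 + k \leq \delta_{\mathfrak{c}}(I(G))$, giving $k \leq \delta_{\mathfrak{c}}(I(G)) - s + 1$ and hence ${\rm reg}(J) \leq {\rm match}(H) + 1 \leq \delta_{\mathfrak{c}}(I(G)) - s + 2$, as desired.

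The $\mathfrak{c}$-boundedness of $w$ is the easy half. Setting $b_i := {\rm deg}_{x_i}(u)$, every generator of $J$ has $x_i$-exponent at most $c_i - b_i$, since its product with $u$ must remain $\mathfrak{c}$-bounded; consequently the polarization introduces at most $c_i - b_i$ copies $x_{i,1}, x_{i,2}, \ldots$ of the variable $x_i$ in $T$. As $\{f_1, \ldots, f_k\}$ are pairwise vertex-disjoint in $H$, at most $c_i - b_i$ of these polarized copies appear across the $f_t$'s, so after depolarization ${\rm deg}_{x_i}(\bar{f}_1 \cdots \bar{f}_k) \leq c_i - b_i$, and consequently ${\rm deg}_{x_i}(w) \leq c_i$.

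The hard part is showing $w \in I(G)^{s-1+k}$. By Corollary \ref{graphh}, each $\bar{f}_t$ is either (i) an edge of $G$, or (ii) a quadratic monomial $x_ix_j$ (possibly with $i = j$) whose variables are even-connected with respect to the factorization $u = e_1 \cdots e_{s-1}$. I would argue by induction on $k$: the base case $k = 1$ is immediate from $\bar{f}_1 \in J$. For the inductive step, the delicate case is (ii), where one needs $\bar{f}_k \in (I(G)^{s-1+k} : u\bar{f}_1 \cdots \bar{f}_{k-1})$. The key observation is that an even-connection path for $\bar{f}_k$ with respect to $u$ remains a valid even-connection path with respect to any edge-product that contains $e_1, \ldots, e_{s-1}$ as a sub-multiset. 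Combined with Banerjee's colon-ideal description underlying Corollary \ref{graphh}, this permits strengthening the inductive hypothesis by maintaining, at every stage, an explicit factorization of $u\bar{f}_1 \cdots \bar{f}_{k-1}$ into $s-1+k-1$ edges of $G$ that is compatible with the substitution needed at the next stage. The matching property in $H$ is precisely what ensures that different path substitutions cannot try to use the same edge of $u$ beyond its multiplicity, and this is what lets the induction close.
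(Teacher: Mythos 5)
Your reduction to ${\rm match}(H)\leq \delta_{\mathfrak{c}}(I(G))-s+1$ is exactly the paper's argument in the special case where every edge of $H$ is an edge of $G$, but it breaks down as a proof of the full lemma: the central claim $u\bar f_1\cdots \bar f_k\in I(G)^{s-1+k}$ is false when the $\bar f_t$ are even-connected pairs rather than edges of $G$, and even the weaker matching bound fails. Concretely, let $G$ have vertices $a,b,x_1,x_2,y_1,y_2$ and edges $ab,\ ax_1,\ ax_2,\ by_1,\ by_2$, take $\mathfrak{c}=(1,\ldots,1)$, $s=2$, $u=ab$. Then $\delta_{\mathfrak{c}}(I(G))={\rm match}(G)=2$, and $((I(G)^2)_{\mathfrak{c}}:ab)=(x_1y_1,x_1y_2,x_2y_1,x_2y_2)=I(K_{2,2})$, so ${\rm match}(H)=2>\delta_{\mathfrak{c}}(I(G))-s+1=1$; moreover $ab\cdot x_1y_1\cdot x_2y_2\notin I(G)^3$, since every edge of $G$ contains $a$ or $b$ and each of these appears only to the first power. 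The matching $\{x_1y_1,x_2y_2\}$ is vertex-disjoint in $H$, yet the two even-connection paths both consume the single copy of the edge $ab$ in $u$ — vertex-disjointness of the endpoints does not make the paths edge-disjoint with respect to the multiset $e_1,\ldots,e_{s-1}$, which is precisely the point your ``key observation'' glosses over. (The lemma's conclusion still holds in this example because $I(K_{2,2})$ has a linear resolution, ${\rm reg}=2$; it is only the route through ${\rm match}(H)+1$ that overshoots.)

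The paper avoids this by splitting into two cases. When all edges of $H$ lie in $G$, it runs your matching argument, which is then valid because $uf_1\cdots f_k$ really is a $\mathfrak{c}$-bounded product of $s-1+k$ edges. When $H$ has an edge $xy\notin E(G)$, it instead inducts on $|\mathfrak{c}|$ and applies the Dao--Huneke--Schweig bound ${\rm reg}(I(H))\leq\max\{{\rm reg}(I(H\setminus N_H[x]))+1,\ {\rm reg}(I(H\setminus x))\}$, identifying $H\setminus N_H[x]$ with an induced subgraph of the graph of a smaller colon ideal $((I(G)^{s-t})_{\mathfrak{c}-\mathfrak{a}}:w)$ obtained by absorbing the even-connection path into $u$, and $H\setminus x$ with the graph of $((I(G)^s)_{\mathfrak{c}'}:u)$ for a strictly smaller bound vector $\mathfrak{c}'$. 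Some substitute for this second mechanism is unavoidable; your proposal as stated cannot be repaired by sharpening the bookkeeping in the induction on $k$.
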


\begin{proof}
We prove the lemma by induction on $|\mathfrak{c}|$. Since, $\delta_{\mathfrak{c}}(I(G))\geq 2$, we must have $|\mathfrak{c}|\geq 4$. Suppose $|\mathfrak{c}|=4$. Then $s=2$ and the ideal $(I(G)^s)_{\mathfrak{c}}$ is a principal ideal. Hence, the assertion trivially holds in this case. So, suppose that $|\mathfrak{c}|\geq 5$. As $u$ is a minimal monomial generator of $(I(G)^{s-1})_{\mathfrak{c}}$, there are (not necessarily distinct) edges $e_1, \ldots, e_{s-1}$ of $G$ such that $u=e_1\cdots e_{s-1}$. We know from Lemma \ref{deg2} that $\big((I(G)^s)_{\mathfrak{c}}:u\big)$ is generated in degree two. Hence, there is a simple graph $H$ such that $I(H)=\big((I(G)^s)_{\mathfrak{c}}:u\big)^{\rm pol}$. By \cite[Corollary 1.6.3]{HHgtm260}, we have ${\rm reg}\big((I(G)^s)_{\mathfrak{c}}:u\big)={\rm reg}(I(H))$. Therefore, it is enough to prove that ${\rm reg}(I(H))\leq \delta_{\mathfrak{c}}(I(G))-s+2$. We have the following two cases.

\vspace{0.3cm}
{\bf Case 1.} Assume that every edge of $H$ is an edge of $G$ (in particular, the ideal $((I(G)^s)_{\mathfrak{c}}:u)$ is a squarefree monomial ideal). Let $M=\{f_1, \ldots, f_m\}$ be a matching of $H$ with $m={\rm match}(G)$. Note that the monomials $uf_1, \ldots, uf_m$ are $\mathfrak{c}$-bounded. Hence, their least common multiple $uf_1\cdots f_m$ is $\mathfrak{c}$-bounded too. This implies that $(I(G)^{s+m-1})_{\mathfrak{c}}\neq 0$. In particular, $m+s-1\leq \delta_{\mathfrak{c}}(I(G))$. Using \cite[Theorem 6.7]{hv}, we conclude that$${\rm reg}(I(H))\leq {\rm match}(H)+1\leq \delta_{\mathfrak{c}}(I(G))-s+2.$$

\vspace{0.3cm}
{\bf Case 2.} Assume that $H$ has an edge $xy$ which is not an edge of $G$. Set $z:=y$ if $y$ is a vertex of $G$ and set $z:=x$ if $y$ belongs to $V(H)\setminus V(G)$ (in this case $xy$ is obtained from polarizing the monomial $x^2\in ((I(G)^s)_{\mathfrak{c}}:u)$). By Corollary \ref{graphh}, the vertices $x$ and $z$ are even-connected in $G$ with respect to $e_1\ldots e_{s-1}$. For each $i=1, \ldots, s-1$, set $e_i=a_ib_i$ and let $P$ be an even-connection between $x$ and $z$ in $G$ with respect to $e_1\ldots e_{s-1}$. By relabeling $e_1, \ldots, e_{s-1}$ and relabeling $a_i, b_i$ (if necessary), we may assume that there is an integer $t$ with $1\leq t\leq s-1$ such that $P$ has the following form:$$P : x, a_1, b_1, a_2, b_2, \ldots, a_t, b_t, z.$$ In other words, the edges $e_1, \ldots, e_t$ appear in $P$ and the edges $e_{t+1}, \ldots, e_{s-1}$ are not involved in $P$. Set $v:=xze_1\cdots e_t$ and $w:=e_{t+1}\cdots e_{s-1}$ (if $t=s-1$, then $w=1$). Assume that $\mathfrak{a}$ is the exponent vector of $v$. Since $v$ is a $\mathfrak{c}$-bounded monomial, we have $\mathfrak{a} \leq^\# \mathfrak{c}$. We know from  Lemma \ref{deg2} that the ideal $((I(G)^{s-t})_{\mathfrak{c-a}}: w)$ is either zero or generated in degree two (it is zero if $\delta_{\mathfrak{c-a}}(I(G))< s-t$). So, there is a (possibly empty) simple graph $H'$ with $I(H')=((I(G)^{s-t})_{\mathfrak{c-a}}: w)^{\rm pol}$. By \cite[Corollary 1.6.3]{HHgtm260}, we have ${\rm reg}((I(G)^{s-t})_{\mathfrak{c-a}}: w)={\rm reg}(I(H'))$. Let $H''$ be the graph obtained from $H\setminus N_H[x]$ by deleting its isolated vertices.

\vspace{0.3cm}
{\bf Claim.} $H''$ is an induced subgraph of $H'$.

\vspace{0.3cm}
{\it Proof of the claim.} Assume that $x', y'$ are two vertices of $H''$ such that $x'y'\in E(H'')$. If $x'y'\in E(G)$, then clearly, $x'y'w\in I(G)^{s-t}$. Moreover, $x'y'\in E(H'')\subseteq E(H)$ implies that $x'y'u$ is a $\mathfrak{c}$-bounded monomial. On the other hand, it follows from $xz\in ((I(G)^s)_{\mathfrak{c}}:u)$ that $xzu$ is a $\mathfrak{c}$-bounded monomial. Since $x',y' \in V(H\setminus N_H[x])$, we have $x',y'\notin \{x,z\}$. Consequently, the least common multiple of the monomials $x'y'u$ and $xzu$ is $x'y'xzu=x'y'vw$. Since $x'y'u$ and $xzu$ are $\mathfrak{c}$-bounded monomials, we deduce that $x'y'vw$ is $\mathfrak{c}$-bounded, too. Consequently, $x'y'w$ is a ($\mathfrak{c-a}$)-bounded monomial. Therefore, $x'y'w\in (I(G)^{s-t})_{\mathfrak{c-a}}$. This implies that $x'y'\in E(H')$. Now, we consider the case that $x'y'\notin E(G)$. Without loss of generality, we may assume that $x'\in V(G)$. Set $z':=y'$ if $y'$ is a vertex of $G$ and set $z':=x'$ if $y'$ belongs to $V(H'')\setminus V(G)$. We first show that $x'z'\in (I(G)^{s-t}: w)$. Recall that $x',z'\notin N_H[x]$. Therefore, $x',z'\notin \{x,z\}$. Since $xzu$ and $x'z'u$ are $\mathfrak{c}$-bounded, their least common multiple, $xzx'z'u$ is also $\mathfrak{c}$-bounded. In particular, the monomials $xx'u$ and $xz'u$ are $\mathfrak{c}$-bounded.  On the other hand, we know that $x'$ and $z'$ are even-connected in $G$ with respect to $e_1\cdots e_{s-1}$. Let $Q$ be an even-connection between $x'$ and $z'$ in $G$ with respect to $e_1\cdots e_{s-1}$. If there is an integer $j$ with $1\leq j\leq t$ such that $e_j$ is appearing in $Q$, then we conclude from \cite[Lemma 6.13]{b} that either $xx'\in((I(G)^s)_{\mathfrak{c}}:u)$ or $xz'\in ((I(G)^s)_{\mathfrak{c}}:u)$. Thus, $x'\in N_H[x]$ or $z'\in N_H[x]$, which are both impossible. Therefore, the edges $e_1, \ldots, e_t$ do not appear in $Q$. It means that $x'$ and $z'$ are even-connected in $G$ with respect to $e_{t+1} \cdots e_{s-1}$. This yields that $x'z'\in (I(G)^{s-t}: w)$. Moreover, as mentioned above, the monomial $xzx'z'u=x'z'wv$ is $\mathfrak{c}$-bounded. Thus, $x'z'w$ is a ($\mathfrak{c}-\mathfrak{a}$)-bounded monomial. It follows that $x'z'\in ((I(G)^{s-t})_\mathfrak{c-a}: w)$, which means that $x'y'\in E(H')$. Therefore, we proved that $E(H'')\subseteq E(H')$ (this also shows that $V(H'')\subseteq V(H')$, as $H''$ does not have any isolated vertex). Therefore, $H''$ is a subgraph of $H'$.

To prove that this subgraph is induced, suppose $x'', y''$ are two vertices of $H''$ such that $x''y''\in E(H')$. Without loss of generality, we may assume that $x''\in V(G)$. Set $z'':=y''$ if $y''$ is a vertex of $G$ and set $z'':=x''$ if $y''$ belongs to $V(H'')\setminus V(G)$. By Corollary \ref{graphh}, either $x''z''\in E(G)$ or $x''$ and $z''$ are even-connected in $G$ with respect to $e_{t+1} \cdots e_{s-1}$. In both cases, $x''z''\in (I(G)^s:u)$. Moreover, since $x''z''w$ is ($\mathfrak{c}-\mathfrak{a}$)-bounded, we deduce that $x''z''u$ is a $\mathfrak{c}$-bounded monomial. So, $x''z''\in ((I(G)^s)_{\mathfrak{c}}:u)$. Equivalently,  $x''y''\in E(H'')$. Thus, $H''$ is an induced subgraph of $H'$ and this completes the proof of the claim.

\vspace{0.3cm}
It follows from the claim, together with \cite[Lemma 3.1]{ha} and the induction hypothesis that
\begin{align*}
{\rm reg}(I(H\setminus N_H[x]))& ={\rm reg}(I(H''))\leq {\rm reg}(I(H'))={\rm reg}((I(G)^{s-t})_{\mathfrak{c-a}}: w)\\ & \leq \delta_{\mathfrak{c}-\mathfrak{a}}(I(G))-(s-t)+2.
\end{align*}
(The last inequality is true even if the ideal $(I(G)^{s-t})_{\mathfrak{c-a}}$ is zero. Indeed, it follows from $w\in (I(G)^{s-t-1})_{\mathfrak{c-a}}$ that $\delta_{_{\mathfrak{c-a}}}(I(G))\geq s-t-1$ and hence, $\delta_{\mathfrak{c}-\mathfrak{a}}(I(G))-(s-t)+2\geq 1$.)  Note that for any monomial $u_0\in \Gc(I(G)^{\delta_{\mathfrak{c}-\mathfrak{a}}(I(G))})_{\mathfrak{c}-\mathfrak{a}})$, the monomial$$u_0v=u_0(xa_1)(b_1a_2)(b_2a_3) \cdots (b_{t-1}a_t)(b_tz)$$is a $\mathfrak{c}$-bounded monomial. Thus, $\delta_{\mathfrak{c}-\mathfrak{a}}(I(G))\leq \delta_{\mathfrak{c}}(I(G))-(t+1)$. Hence, we conclude from the above inequalities that
\[
\begin{array}{rl}
{\rm reg}(I(H\setminus N_H[x]))\leq \delta_{\mathfrak{c}}(I(G))-(t+1)-(s-t)+2=\delta_{\mathfrak{c}}(I(G))-s+1.
\end{array} \tag{2} \label{2}
\]

Now we study the regularity of $I(H\setminus x)$. Note that $I(H\setminus x)=((I(G)^s)_{\mathfrak{c'}}:u)^{\rm pol}$, where $\mathfrak{c'}$ is the vector obtained from $\mathfrak{c}$ by replacing the component corresponding to the vertex $x$ with ${\rm deg}_{x}(u)$. Since $x$ is a (non-isolated) vertex of $H$, we have $|\mathfrak{c'}|< |\mathfrak{c}|$. Thus, the induction hypothesis implies that
\[
\begin{array}{rl}
{\rm reg}(I(H\setminus x))\leq \delta_{\mathfrak{c}'}(I(G))-s+2\leq \delta_{\mathfrak{c}}(I(G))-s+2.
\end{array} \tag{3} \label{3}
\]

We know from \cite[Lemma 3.1]{dhs} that
\[
\begin{array}{rl}
{\rm reg}\big((I(G)^s)_{\mathfrak{c}}:u\big)={\rm reg}(I(H))\leq \max\big\{{\rm reg}(I(H\setminus N_H[x]))+1, {\rm reg}(I(H\setminus x))\big\}.
\end{array} \tag{4} \label{4}
\]
Therefore, the assertion follows from inequalities (\ref{2}), (\ref{3}) and (\ref{4}).
\end{proof}

We are now ready to prove the main result of this section.

\begin{Theorem} \label{regmain}
For any graph $G$ and for any positive integer $s\leq \delta_{\mathfrak{c}}(I(G))$, we have$${\rm reg}\big((I(G)^s)_{\mathfrak{c}}\big)\leq \delta_{\mathfrak{c}}(I(G))+s.$$
\end{Theorem}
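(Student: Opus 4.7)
The plan is to induct on $s$ from $1$ up to $\delta := \delta_{\mathfrak{c}}(I(G))$, using Theorem \ref{regcol} to propagate the bound one step at a time and Lemma \ref{colon} to control the colon ideals that appear along the way. Thus the argument is purely an assembly of already-proven ingredients.

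For the base case $s=1$, I would observe that $I(G)_{\mathfrak{c}}$ is exactly the edge ideal $I(G')$ of the subgraph $G'$ of $G$ obtained by deleting every edge $x_ix_j$ with $c_i=0$ or $c_j=0$. Applying H\`a and Van Tuyl's bound \cite[Theorem 6.7]{hv} gives $\mathrm{reg}(I(G')) \leq \mathrm{match}(G') + 1$. Since the edges of any matching of $G'$ are $\mathfrak{c}$-bounded and pairwise disjoint, their product is $\mathfrak{c}$-bounded, so $\mathrm{match}(G') \leq \delta$. This yields $\mathrm{reg}(I(G)_{\mathfrak{c}}) \leq \delta+1$, as required. (If $I(G)_{\mathfrak{c}}=0$ then $\delta=0$ and the theorem is vacuous.)

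For the inductive step, I assume the bound holds for $s-1$, where $2 \leq s \leq \delta$, and apply Theorem \ref{regcol} with $s$ replaced by $s-1$ to get
\[
\mathrm{reg}((I(G)^s)_{\mathfrak{c}}) \leq \max\bigl\{\mathrm{reg}((I(G)^s)_{\mathfrak{c}}:u_i)+2(s-1),\ \mathrm{reg}((I(G)^{s-1})_{\mathfrak{c}})\bigr\},
\]
where $u_i$ ranges over $\Gc((I(G)^{s-1})_{\mathfrak{c}})$. By Lemma \ref{colon} (in the form stated, with $s$ left as $s$), $\mathrm{reg}((I(G)^s)_{\mathfrak{c}}:u_i) \leq \delta - s + 2$, so each quantity $\mathrm{reg}((I(G)^s)_{\mathfrak{c}}:u_i)+2(s-1)$ is at most $\delta + s$. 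The induction hypothesis gives $\mathrm{reg}((I(G)^{s-1})_{\mathfrak{c}}) \leq \delta + (s-1) \leq \delta + s$. Combining the two bounds inside the maximum yields $\mathrm{reg}((I(G)^s)_{\mathfrak{c}}) \leq \delta + s$, closing the induction.

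The real difficulty has already been absorbed into Lemma \ref{colon}, whose proof handles the non-squarefree case by polarization and relies on the even-connection machinery together with an inner induction on $|\mathfrak{c}|$. Once Lemma \ref{colon}, Theorem \ref{regcol}, and the H\`a--Van Tuyl matching bound are in place, the final assembly is a short formal calculation; I do not expect any substantive obstacle at this stage.
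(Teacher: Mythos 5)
Your proposal is correct and follows essentially the same route as the paper: induction on $s$, the base case via the H\`a--Van Tuyl matching bound together with the observation that the relevant matching number is at most $\delta_{\mathfrak{c}}(I(G))$, and the inductive step by combining Theorem \ref{regcol} (applied with $s-1$) with Lemma \ref{colon}. The only cosmetic difference is that the paper reduces at the outset to the case $c_i>0$ by deleting vertices, whereas you handle the zero components directly inside the base case by passing to the subgraph $G'$; both yield the same bound.
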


\begin{proof}
Suppose that $\mathfrak{c}=(c_1, \ldots, c_n)$. If $c_i=0$, for some integer $i$ with $1\leq i\leq n$, then we replace $G$ by $G-x_i$. Therefore, we may assume that $c_i>0$, for each integer $i=1, 2, \ldots, n$.

We prove the theorem by induction on $s$. As $c_i\geq 1$ for each $i$, we deduce that ${\rm match}(G)\leq \delta_{\mathfrak{c}}(I(G))$. Hence, for $s=1$, the assertion follows from \cite[Theorem 6.7]{hv}. Thus, suppose $s\geq 2$. Let $\Gc((I(G)^{s-1})_{\mathfrak{c}})=\{u_1, \ldots, u_m\}$ denote the set of minimal monomial generators of $(I(G)^{s-1})_{\mathfrak{c}}$. Using the induction hypothesis, we have$${\rm reg}\big((I(G)^{s-1})_{\mathfrak{c}}\big)\leq \delta_{\mathfrak{c}}(I(G))+ (s-1)< \delta_{\mathfrak{c}}(I(G))+s.$$Moreover, Lemma \ref{colon} implies that for each integer $i=1, \ldots, m$, the inequality$${\rm reg}\big((I(G)^s)_{\mathfrak{c}}:u_i\big)\leq \delta_{\mathfrak{c}}(I(G))-s+2$$holds. Thus, the assertion follows from Theorem \ref{regcol}.
\end{proof}

\begin{Remark}
Note that for $s=\delta_{\mathfrak{c}}(I(G))$, the inequality of Theorem \ref{regmain} is indeed an equality. This, in particular, provides an alternative proof for Corollary \ref{linres}.
\end{Remark}

\end{document}